\documentclass[11pt]{amsart}

\usepackage[T1]{fontenc}
\usepackage[utf8]{inputenc}
\usepackage{lmodern}
\usepackage{amsmath,amssymb,amsthm,mathtools}
\usepackage{booktabs}
\usepackage{enumitem}
\usepackage{microtype}
\usepackage{array}
\usepackage{longtable}
\usepackage{hyperref}
\usepackage[nameinlink,capitalise]{cleveref}

\hypersetup{
  colorlinks=true,
  linkcolor=blue,
  citecolor=blue,
  urlcolor=blue,
  pdftitle={Sharp Broken-Power Lorentz Estimates for Fractional Powers of Radial Schrodinger Operators with Inverse-Square Asymptotics},
  pdfauthor={Haochen Liu, Qinghao Yu, Hongyan Zhou}
}

\newtheorem{theorem}{Theorem}[section]
\newtheorem{proposition}[theorem]{Proposition}
\newtheorem{lemma}[theorem]{Lemma}
\newtheorem{corollary}[theorem]{Corollary}
\newtheorem{remark}[theorem]{Remark}

\newcommand{\R}{\mathbb R}

\newcommand{\Z}{\mathbb Z}
\newcommand{\one}{\mathbf 1}
\newcommand{\dd}{\,\mathrm d}
\newcommand{\supp}{\operatorname{supp}}
\newcommand{\dist}{\operatorname{dist}}

\newcommand{\Lp}[2]{L^{#1,#2}}

\newcommand{\muU}{\mu_U}
\newcommand{\Kothe}{K\"othe}

\title[Sharp broken-power Lorentz estimates]
{Sharp Broken-Power Lorentz Estimates for Fractional Powers of Radial Schr\"odinger Operators with Inverse-Square Asymptotics}

\author{Haochen Liu}
\address{School of Mathematics and Physics, Qingdao University of Science and Technology, Qingdao, China}
\email{liuhc@mails.qust.edu.cn}

\author{Qinghao Yu}
\address{School of Mathematics and Physics, Qingdao University of Science and Technology, Qingdao, China}
\email{19707725295@163.com}

\author{Hongyan Zhou}
\address{School of Mathematics and Physics, Qingdao University of Science and Technology, Qingdao, China}
\email{zhouhongyan@qust.edu.cn}
\thanks{Corresponding author: Hongyan Zhou.}
\date{July 13, 2026}

\subjclass[2020]{Primary 42B35, 47G40; Secondary 35J10, 46E30, 35P05}
\keywords{Schr\"odinger operator, inverse-square asymptotics, fractional integral, Lorentz space, broken power weight, Stein--Weiss inequality, heat kernel}

\begin{document}

\begin{abstract}
Let $H=-\Delta+V(|x|)$ be a nonnegative radial Schr\"odinger operator on $\R^d$, $d\ge2$, whose positive harmonic function has two power regimes
\[
U(r)\simeq r^{-\sigma_0}\quad(0<r\le1),
\qquad
U(r)\simeq r^{-\sigma_\infty}\quad(r\ge1),
\]
with $-d/2<\sigma_0,\sigma_\infty<d/2$, and assume the two-sided ground-state heat-kernel estimate of Ishige--Kabeya--Ouhabaz.  We first determine the maximal open range in which the kernel of $H^{-s/2}$ has the clean form
\[
K_s^H(x,y)\simeq |x-y|^{s-d}
\frac{U(|x|)U(|y|)}
{U(|x|+|x-y|)U(|y|+|x-y|)}.
\]
It is
\[
0<s<\min\{d,d-2\sigma_0,d-2\sigma_\infty\}.
\]
Within this range we give a complete necessary-and-sufficient classification of the broken-power estimate
\[
\begin{gathered}
\|w_{-\beta_0,-\beta_\infty}H^{-s/2}f\|_{L^{q,v}}
\lesssim
\|w_{\alpha_0,\alpha_\infty}f\|_{L^{p,u}},\\
1<p,q<\infty,\qquad 1\le u,v\le\infty.
\end{gathered}
\]
The theorem includes all signed ground-state exponents, the full upper-triangle range $q<p$, all one-sided weight equalities, both scale equalities, and simultaneous endpoint corners.  Scale equality is governed by the Lorentz condition $u\le v$ even when $q<p$, while an input or output power endpoint requires respectively $u=1$ or $v=\infty$; at a same-side power/scale corner the only admissible pair is $(u,v)=(1,\infty)$.  The proof combines a maximal clean-kernel argument, exact local Lorentz--HLS estimates, rank-one endpoint analysis, geometric annular sequence spaces, a signed triangular matrix theorem, and a nine-block decomposition.
\end{abstract}

\maketitle

\section{Introduction}

Fractional integration begins with the Hardy--Littlewood--Sobolev theorem and its weighted Stein--Weiss extension; see \cite{HardyLittlewood1928,Sobolev1938,SteinWeiss1958,AdamsHedberg1996,Grafakos2014}.  In the classical Euclidean setting the kernel is homogeneous, so scaling provides a single linear relation among the exponents.  Power weights and more general two-weight problems have been studied from many points of view, including weighted convolution and interpolation inequalities and the work of Muckenhoupt--Wheeden, Sawyer, and Sawyer--Wheeden \cite{Kerman1983,CaffarelliKohnNirenberg1984,MuckenhouptWheeden1974,Sawyer1988,SawyerWheeden1992}.  In the upper triangle $q<p$, nonlinear potentials and sequential testing reveal an additional summability mechanism \cite{CascanteOrtegaVerbitsky2004,Tanaka2014,HanninenHytonenLi2016}.  Related sharp power-weight theories occur for radial functions and Hankel-type transforms \cite{Duoandikoetxea2013,GorbachevIvanov2019,NowakStempak2017}.

For a Schr\"odinger operator, fractional powers are no longer convolution operators.  Heat-kernel estimates provide a natural replacement for Fourier analysis; see, for instance, \cite{Davies1989,Ouhabaz2005,Shen1995,BongioanniHarboureSalinas2008,HuZahle2009}.  The inverse-square scale is critical for elliptic and dispersive equations.  For the exactly homogeneous operator
\[
\mathcal L_a=-\Delta+a|x|^{-2},
\]
precise kernel estimates and weighted Sobolev theory are available in \cite{KMMVZZ2018,BuiEtAl2017}; a sharp two-weight endpoint analysis for that homogeneous model was recently given in \cite{LiuYuZhou2026}.  The present paper treats a different phenomenon: the potential is only asymptotically inverse square and the positive ground state may have one power at the origin and another at infinity.

Our heat-kernel input is the theorem of Ishige, Kabeya, and Ouhabaz \cite{IKO2017}.  Under a radial inverse-square framework and the hypothesis $U^2\in A_2$, they obtain a positive harmonic function $U$ and two-sided Gaussian estimates after a ground-state transform.  The $A_2$ geometry is consistent with the weighted elliptic theory of Fabes--Kenig--Serapioni \cite{FabesKenigSerapioni1982} and with the classical Muckenhoupt framework \cite{Muckenhoupt1972}.  Lorentz estimates for inverse-square heat semigroups have also been investigated in \cite{IshigeTateishi2020a,IshigeTateishi2021}.  Here the heat kernel is integrated in the spectral parameter to obtain an explicit fractional kernel, after which the full Lorentz two-weight range is computed.

The first main point is that the clean formula has a genuinely sharp uniform range.  If
\[
U(r)\simeq r^{-\sigma_0}\quad(r\le1),
\qquad
U(r)\simeq r^{-\sigma_\infty}\quad(r\ge1),
\]
then
\[
K_s^H(x,y)\simeq |x-y|^{s-d}
\frac{U(|x|)U(|y|)}
{U(|x|+|x-y|)U(|y|+|x-y|)}
\]
holds uniformly exactly in the open range
\[
0<s<s_{\rm clean}:=\min\{d,d-2\sigma_0,d-2\sigma_\infty\}.
\]
The restriction $s<d$ comes from a fixed transition annulus, $s<d-2\sigma_0$ from small comparable radii, and $s<d-2\sigma_\infty$ from the large-time tail.  At each boundary an additional logarithm or a saturation effect destroys first-scale comparability.

The second main point is the exact Lorentz classification.  For the broken power
\[
w_{a_0,a_\infty}(x)=|x|^{a_0}\one_{\{|x|\le1\}}+|x|^{a_\infty}\one_{\{|x|>1\}},
\]
we classify
\begin{equation}\label{eq:intro-estimate}
\|w_{-\beta_0,-\beta_\infty}H^{-s/2}f\|_{L^{q,v}}
\le C
\|w_{\alpha_0,\alpha_\infty}f\|_{L^{p,u}}
\end{equation}
for all $1<p,q<\infty$ and $1\le u,v\le\infty$.  Set
\[
S=s+\frac dq-\frac dp.
\]
The power region is described by seven nonnegative margins: a local margin $S$, four one-sided origin/infinity margins, and two scale margins.  Unlike a global homogeneous theorem, there is no single scaling equality.  Instead one obtains the sandwich
\[
\alpha_0+\beta_0\le S\le\alpha_\infty+\beta_\infty.
\]
The fine indices decide which equalities survive.  The most notable conclusions are:
\begin{enumerate}[label=\textup{(\roman*)},leftmargin=2.4em]
\item every scale equality requires $u\le v$, even when $S>0$;
\item scale equality can occur for $q<p$ after a Lorentz refinement, although it is excluded in strong $L^p\to L^q$ form;
\item an input-critical power requires $u=1$, and an output-critical power requires $v=\infty$;
\item when a scale equality meets a power endpoint at the same geometric end, the only admissible pair is $(u,v)=(1,\infty)$.
\end{enumerate}

The proof is not a direct application of an abstract two-weight criterion.  Such criteria provide important context \cite{Sawyer1988,Kairema2013,CascanteOrtegaVerbitsky2004,Tanaka2014,HanninenHytonenLi2016}, but the present two-regime kernel permits an explicit calculation.  A partition into origin, transition, and infinity produces nine blocks.  Near the diagonal one needs an exact truncated Riesz theorem in Lorentz spaces.  Separated transition blocks are rank one.  Deep at either end, separated annuli are encoded by a triangular sequence operator, while comparable annuli retain the local singularity and require an operator-level argument.  A key technical warning is that a general Lorentz norm cannot be scalarized as an $\ell^u$ norm of arbitrary local Lorentz pieces.  We use geometric packet discretization only where it is valid, conditional expectation for separated shell rectangles, and a separate comparable-radius lemma.

Lorentz spaces and convolution inequalities go back to Lorentz, O'Neil, and Hunt \cite{Lorentz1950,ONeil1963,Hunt1966}; standard interpolation references include \cite{BerghLofstrom1976,BennettSharpley1988}.  We prove the exact Lorentz estimates needed here directly from rearrangements, so no unverified endpoint interpolation or weak-space density assertion is used.  In particular, the operator on $L^{p,\infty}$ is defined by positive monotone truncation.

The paper is organized as follows.  In \cref{sec:setting} we formulate the heat-kernel framework and state the main results.  The clean fractional kernel and its maximal range are proved in \cref{sec:kernel}.  Lorentz preliminaries and the local HLS theorem appear in \cref{sec:lorentz}.  Rank-one and annular sequence lemmas are established in \cref{sec:annular}, followed by the exact triangular matrix theorem in \cref{sec:triangular}.  Necessity is proved in \cref{sec:necessity}.  Comparable-radius blocks are treated in \cref{sec:comparable}, and the nine-block sufficiency proof is completed in \cref{sec:sufficiency}.  The final section records extension, duality, strong-space reductions, and model cases.

\section{Framework and main results}\label{sec:setting}

Let $d\ge2$, and let $H=-\Delta+V(|x|)$ be a nonnegative self-adjoint Schr\"odinger operator on $L^2(\R^d)$, realized by the Friedrichs or form construction.  We assume that there is a positive radial $H$-harmonic function $U$ satisfying
\begin{equation}\label{eq:U-powers}
U(r)\simeq r^{-\sigma_0}\quad(0<r\le1),
\qquad
U(r)\simeq r^{-\sigma_\infty}\quad(r\ge1),
\end{equation}
where
\begin{equation}\label{eq:sigma-A2}
-\frac d2<\sigma_0,\sigma_\infty<\frac d2,
\qquad U^2\in A_2(\R^d).
\end{equation}
Define
\[
\dd\muU(x)=U(|x|)^2\dd x.
\]
We further assume the two-sided heat-kernel estimate
\begin{equation}\label{eq:heat-kernel-assumption}
p_t^H(x,y)\simeq
\frac{U(|x|)U(|y|)}
{\sqrt{\muU(B(x,\sqrt t))\muU(B(y,\sqrt t))}}
\exp\!\left(-c\frac{|x-y|^2}{t}\right),
\end{equation}
where the constants and the Gaussian parameters may differ in the two directions of comparison.  The radial class in \cite{IKO2017} supplies \eqref{eq:U-powers}--\eqref{eq:heat-kernel-assumption} in its pure-power $A_2$ branch.  We formulate the paper at the level of these consequences, thereby separating the mapping theorem from the detailed classification of radial potentials.

For $s>0$, define formally
\begin{equation}\label{eq:Mellin-def}
H^{-s/2}=\frac1{\Gamma(s/2)}\int_0^\infty t^{s/2-1}e^{-tH}\dd t.
\end{equation}
The next theorem identifies the range in which this operator has a clean pointwise kernel.

\begin{theorem}[Maximal clean fractional-kernel range]\label{thm:clean-kernel}
Assume \eqref{eq:U-powers}--\eqref{eq:heat-kernel-assumption} and put
\begin{equation}\label{eq:s-clean}
s_{\rm clean}=\min\{d,d-2\sigma_0,d-2\sigma_\infty\}.
\end{equation}
For every
\[
0<s<s_{\rm clean},
\]
the spectral projection $E_H(\{0\})$ vanishes and $H^{-s/2}$ has a positive kernel satisfying
\begin{equation}\label{eq:clean-kernel-main}
K_s^H(x,y)\simeq |x-y|^{s-d}
\frac{U(|x|)U(|y|)}
{U(|x|+|x-y|)U(|y|+|x-y|)},
\qquad x\ne y.
\end{equation}
The interval is maximal for a uniform comparison of the form \eqref{eq:clean-kernel-main}: at $s=d$, $s=d-2\sigma_0$, or $s=d-2\sigma_\infty$ whenever the corresponding boundary is active, one obtains a logarithmic or saturation obstruction.
\end{theorem}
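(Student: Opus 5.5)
The plan is to integrate \eqref{eq:heat-kernel-assumption} in $t$ against $t^{s/2-1}$ in \eqref{eq:Mellin-def}, after first putting the heat kernel in a more explicit form.

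\textbf{Step 1: the ball measure.} Since $U(r)\simeq r^{-\sigma_0}$ or $r^{-\sigma_\infty}$ with exponents in $(-d/2,d/2)$, I will check directly that
\[
\muU(B(x,\rho))\simeq \rho^d\,U(|x|+\rho)^2 .
\]
The check separates the case $\rho\le|x|/2$, where $U$ is essentially constant on the ball, from the case $\rho>|x|/2$, where the ball is comparable to $B(0,|x|+\rho)$ and the radial integral $\int_0^{R}r^{d-1-2\sigma}\,dr$ converges because $2\sigma<d$. Substituting into \eqref{eq:heat-kernel-assumption} gives
\[
p_t^H(x,y)\simeq t^{-d/2}\,\frac{U(|x|)U(|y|)}{U(|x|+\sqrt t)\,U(|y|+\sqrt t)}\,e^{-c|x-y|^2/t}.
\]
Since $d-2\sigma_\infty>0$, the ball measure tends to infinity and $p_t^H(x,y)\to0$ pointwise as $t\to\infty$. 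Monotone convergence applied to $\langle e^{-tH}f,f\rangle$ for nonnegative compactly supported $f$ then shows $\|E_H(\{0\})f\|^2=0$, so $E_H(\{0\})=0$. This justifies \eqref{eq:Mellin-def}, and Tonelli identifies $K_s^H=\Gamma(s/2)^{-1}\int_0^\infty t^{s/2-1}p_t^H\,dt$.

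\textbf{Step 2: the two-sided kernel bound.} Write $r=|x-y|$ and $\tau=\sqrt t$. The lower bound comes from the window $\tau\in[r,2r]$: there the Gaussian is $\simeq1$, and $U(|x|+\tau)\simeq U(|x|+r)$ by the doubling-type comparability of broken powers. For the upper bound on $\tau\le r$, I use $t^{-d/2}e^{-cr^2/t}\lesssim t^{-d/2}(t/r^2)^{N}$ together with the polynomial control
\[
\frac{U(|x|+r)}{U(|x|+\tau)}\lesssim (r/\tau)^{d/2},
\]
which follows from $|\sigma|<d/2$; choosing $N$ large gives $\lesssim r^{s-d}\times$(clean ratio). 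For $\tau\ge r$, put $R=\max(|x|,|y|)$. Then $|x|+\tau\simeq|y|+\tau\simeq R+\tau$, and the tail equals
\[
U(|x|)U(|y|)\int_r^\infty \tau^{s-d-1}\,U(R+\tau)^{-2}\,d\tau .
\]
This must be bounded by $r^{s-d}U(R+r)^{-2}$. I split the $\tau$-range into $\tau<R$, $R<\tau<1$, and $\tau>\max(R,1)$:
\begin{itemize}
\item on $\tau<R$, the weight $U(R+\tau)^{-2}\simeq U(R)^{-2}$ is flat and $\int_r^R\tau^{s-d-1}\,d\tau\lesssim r^{s-d}$ needs $s<d$;
\item on $R<\tau<1$, the integrand is $\tau^{s-d+2\sigma_0-1}$, whose integral is dominated by its lower endpoint exactly when $s<d-2\sigma_0$;
\item on $\tau>\max(R,1)$, the integrand is $\tau^{s-d+2\sigma_\infty-1}$, which converges with lower-endpoint domination when $s<d-2\sigma_\infty$.
\end{itemize}
In each piece the endpoint value reassembles into $r^{s-d}U(R+r)^{-2}$, which is \eqref{eq:clean-kernel-main}. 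The cases $r\ge R$ and $r<R$, and $R$ below or above $1$, are then routine bookkeeping.

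\textbf{Step 3: maximality.} The same split shows that at each boundary the dominant piece produces a logarithm.
\begin{itemize}
\item At $s=d$: take $R\ge2$ and $r\to0$. The first piece gives $r^{0}\log(R/r)$ times the ratio, which is unbounded relative to the clean side.
\item At $s=d-2\sigma_0$ (active when $\sigma_0\ge\sigma_\infty,0$): take $|x|\simeq|y|\simeq r\to0$ with $|x-y|=r$. The second piece contributes $r^{s-d}U(r)^{-2}\log(1/r)$ against the clean value $r^{s-d}U(r)^{-2}$; this is the comparable-small-radii obstruction.
\item At $s=d-2\sigma_\infty$: the third integral is $\int^\infty\tau^{-1}\,d\tau=\infty$, so $K_s^H\equiv\infty$, a saturation obstruction from the large-time tail.
\end{itemize}
When two boundaries coincide, the same witnesses apply to whichever one is the minimum.

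\textbf{Main obstacle.} I expect the delicate part to be Step 2's upper bound with uniform constants across the transition at $\tau\sim1$, $R\sim1$. This requires a clean quasi-monotonicity lemma: $\tau^{a}U(R+\tau)^{-2}$ should be comparable to a function that is decreasing on $[r,\infty)$, with the exponent $a$ strictly negative on both regimes. I would prove this first, as a lemma on broken powers, and then feed it into all three pieces.
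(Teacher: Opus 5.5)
Your proposal is correct and follows essentially the same route as the paper. The shared steps are the weighted-volume lemma giving $p_t^H\simeq t^{-d/2}U(|x|)U(|y|)/\bigl(U(|x|+\sqrt t)U(|y|+\sqrt t)\bigr)e^{-c|x-y|^2/t}$, the lower bound from $\sqrt t\in[r,2r]$, Gaussian-versus-polynomial control below $r$, and $|x|+\sqrt t\simeq|y|+\sqrt t$ above $r$ with the three negative regime exponents $s-d$, $s-d+2\sigma_0$, $s-d+2\sigma_\infty$ (your three-piece split is the paper's ``runs'' argument), together with the same logarithmic and divergence witnesses for maximality. One small correction: the limit $p_t^H\to0$ under the integral in $\langle e^{-tH}f,f\rangle$ needs dominated rather than monotone convergence, and the domination comes from the bound $p_t^H(x,y)\lesssim U(|x|)U(|y|)\,t^{-(d-2\sigma_\infty)/2}$ for $t\ge1$ on compact sets, as in the paper's zero-mode lemma.
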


Fix henceforth $0<s<s_{\rm clean}$.  For $a_0,a_\infty\in\R$, write
\begin{equation}\label{eq:broken-weight}
w_{a_0,a_\infty}(x)=
\begin{cases}
|x|^{a_0},&|x|\le1,\\
|x|^{a_\infty},&|x|>1.
\end{cases}
\end{equation}
Let $1<p,q<\infty$ and $1\le u,v\le\infty$.  Set
\begin{equation}\label{eq:S-def}
S=s+\frac dq-\frac dp
\end{equation}
and define
\begin{equation}\label{eq:margins}
\begin{aligned}
A_0&=\frac d{p'}-\alpha_0-\sigma_0,
& B_0&=\frac dq-\beta_0-\sigma_0,\\
C_0&=S-\alpha_0-\beta_0,\\
A_\infty&=\alpha_\infty-\left(s+\sigma_\infty-\frac dp\right),
& B_\infty&=\beta_\infty-\left(s+\sigma_\infty-\frac d{q'}\right),\\
C_\infty&=\alpha_\infty+\beta_\infty-S.
\end{aligned}
\end{equation}
We also use
\begin{equation}\label{eq:gamma-def}
\gamma_j=d-s-2\sigma_j>0,
\qquad j\in\{0,\infty\}.
\end{equation}
A direct calculation gives
\begin{equation}\label{eq:margin-identities}
A_0+B_0=C_0+\gamma_0,
\qquad
A_\infty+B_\infty=C_\infty+\gamma_\infty.
\end{equation}
Thus at a scale equality $C_j=0$, the two same-side power endpoints cannot both vanish.

\begin{theorem}[Sharp broken-power Lorentz classification]\label{thm:main}
Under the assumptions above, the estimate
\begin{equation}\label{eq:main-estimate}
\left\|w_{-\beta_0,-\beta_\infty}H^{-s/2}f\right\|_{L^{q,v}(\R^d)}
\le C
\left\|w_{\alpha_0,\alpha_\infty}f\right\|_{L^{p,u}(\R^d)}
\end{equation}
holds for all measurable $f$ if and only if
\begin{equation}\label{eq:power-all}
S,A_0,B_0,C_0,A_\infty,B_\infty,C_\infty\ge0
\end{equation}
and all applicable fine-index rules below are satisfied:
\begin{align}
&S=0 \quad\Longrightarrow\quad u\le v,\label{eq:fine-S}\\
&A_0=0\ \text{or}\ A_\infty=0
\quad\Longrightarrow\quad u=1,\label{eq:fine-A}\\
&B_0=0\ \text{or}\ B_\infty=0
\quad\Longrightarrow\quad v=\infty,\label{eq:fine-B}\\
&C_0=0\ \text{or}\ C_\infty=0
\quad\Longrightarrow\quad u\le v,\label{eq:fine-C}\\
&C_j=0\ \text{and}\ (A_j=0\ \text{or}\ B_j=0)
\quad\Longrightarrow\quad (u,v)=(1,\infty),
\quad j=0,\infty.\label{eq:fine-corner}
\end{align}
There are no further fine-index restrictions.
\end{theorem}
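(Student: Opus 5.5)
We argue throughout with the clean kernel of \cref{thm:clean-kernel}. Writing $U$ via \eqref{eq:U-powers}, the kernel \eqref{eq:clean-kernel-main} becomes an explicit piecewise power kernel. When $|y|\le|x|/2$ it is $\simeq |x|^{s-d}\,U(|y|)/U(|x|)$ up to the two-regime transition. When $|x|\le|y|/2$ it is symmetric to this. When $|x|\simeq|y|$ it is $\simeq|x-y|^{s-d}\,U(|x|)^2/U(|x|+|x-y|)^2$. Since the kernel is positive, \eqref{eq:main-estimate} is a positive two-weight problem. We may restrict to $f\ge0$, defined on $L^{p,\infty}$ by monotone truncation. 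We split $\R^d$ into the origin region $\{|x|\le 1/2\}$, the transition region $\{1/2<|x|\le2\}$ and the infinity region $\{|x|>2\}$. This yields nine blocks $T_{IJ}$, and the estimate holds if and only if each block is bounded.

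\textbf{Necessity.} The margins come from explicit test functions, and we check each one in turn.
\begin{enumerate}[label=\textup{(\roman*)},leftmargin=2.4em]
\item $S\ge0$, with $u\le v$ at $S=0$, comes from localized bumps near a point $|x_0|=1$ on small scales. There the kernel is the Riesz kernel and the weights are constant, so the local Lorentz HLS sharpness applies.
\item $A_0\ge0$ follows by testing $f=|y|^{-d/p-\alpha_0}\one_{\{\epsilon<|y|<1\}}$, possibly with a logarithmic factor, and evaluating $H^{-s/2}f$ near $|x|=1$. This is a rank-one functional $f\mapsto\int f\,U$ paired against a fixed output. Boundedness of the functional $f\mapsto\int_{|y|<1} f(y)U(|y|)\,dy$ on $w_{\alpha_0,\cdot}^{-1}L^{p,u}$ is exactly $A_0>0$, or $A_0=0$ with $u=1$, because $L^{p,1}$ is the only Lorentz space whose dual contains $L^{p',\infty}$ power functions.
\item Dually, $B_0$ and $B_\infty$ come from the output profile $U(|x|)$ and require membership in $L^{q,v}$, which forces $v=\infty$ at equality. $A_\infty$ is handled analogously with the tail $|x|^{s-d}U(|y|)/U(|x|)$.
\item $C_0$ and $C_\infty$ come from comparable-radius scaling. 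Dilate a bump to $|x|\sim 2^{-k}$ or $2^k$ and let $k\to\infty$; the ratio of the norms is $2^{\mp kC_j}$.
\item At $C_j=0$, u\le v follows by superposing $N$ dilated bumps on disjoint dyadic shells with free coefficients. The output shells decouple, so the estimate forces $\ell^u\to\ell^v$ boundedness of a diagonal-dominant sequence map. This uses the geometric-packet discretization, which is valid because shells are geometric.
\item The corner rule \eqref{eq:fine-corner} comes from combining the endpoint profile with the scale family. At $C_j=0$, $A_j=0$ the triangular tail operator contains a Hardy-type summation with critical exponent. This forces an $\ell^1$ input and an $\ell^\infty$ output simultaneously, hence $(u,v)=(1,\infty)$.
\end{enumerate}

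\textbf{Sufficiency, blockwise.}
\begin{enumerate}[label=\textup{(\roman*)},leftmargin=2.4em]
\item Diagonal blocks in the transition region. The weights are comparable to constants and the kernel is $\simeq|x-y|^{s-d}$ truncated, so a truncated Lorentz HLS theorem applies, proved by rearrangement (O'Neil). It gives $L^{p,u}\to L^{q,v}$ for $S>0$ with any $u,v$, since truncation gains integrability. At $S=0$ it gives the bound whenever $u\le v$.
\item Separated blocks involving the transition region. These are rank one, of the form $g(x)\int f h$. They are bounded if and only if $h\in (L^{p,u})'$ with weight and $g\in L^{q,v}$ with weight, which is exactly the $A$ and $B$ conditions with their fine rules.
\item Deep separated blocks, origin to origin or infinity to infinity. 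Applying conditional expectation on dyadic annuli to the separated shell rectangles reduces them to a triangular matrix $a_{kl}=2^{-\gamma (k-l)}\times(\text{weights})$ acting between weighted geometric sequence spaces. The signed triangular matrix theorem then classifies this precisely in terms of $A_j$, $B_j$, $C_j$ and the corner rule. It includes the $q<p$ case, where the summability mechanism enters through $u\le v$.
\item Cross blocks, origin to infinity. These factor as products of two one-sided tails, so they are again bounded by the rank-one and Hardy-type estimates.
\item Comparable-radius blocks at depth. On each shell, rescale to unit size, where the local HLS applies with constant $2^{-kC_j}$. Summing over shells is the main obstacle, because a Lorentz norm is not an $\ell^u$ sum of local pieces. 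For $C_j>0$, the geometric decay lets us dominate by a sum of dilated operators and use the quasi-triangle inequality. At $C_j=0$ we treat the shell-diagonal operator as one operator. We compare its distribution function with that of a fixed-scale HLS applied to the radial rearrangement of a shell-localized piece, and then use the embedding $L^{p,u}\hookrightarrow L^{p,v}$ together with $\ell^u\subset\ell^v$ on disjoint supports.
\end{enumerate}
I expect this comparable-radius scale-critical lemma to be the hardest step, especially when $q<p$. Finally, collecting the nine blocks shows that every condition listed is sufficient, and no further restriction appears.
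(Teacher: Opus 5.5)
\textbf{Overall structure.} Your architecture matches the paper's: nine blocks, rank-one transition and cross-tail blocks, conditional expectation plus the triangular theorem for the separated deep blocks, and packet and logarithmic tests for necessity.

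Your treatment of comparable radii when $C_j>0$ is sound, and simpler than the paper's factorization through $L^P\to L^Q$. By dilation, the local theorem gives each shell piece norm $\lesssim 2^{-|k|C_j}$ (for all $(u,v)$ if $S>0$, for $u\le v$ if $S=0$), and these pieces sum in the normable space $L^{q,v}$.

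There is a small slip in necessity (iii): the profile governing $B_\infty$ is $|x|^{s-d}/U(|x|)\simeq|x|^{s-d+\sigma_\infty}$, not $U(|x|)$.

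\textbf{The gap: comparable radii at $C_j=0$.} This is exactly where the theorem's new content, scale equality even for $q<p$, lives. At $C_j=0$ the shell pieces have uniform norms, so everything rests on how they recombine. The mechanism you name (reduce to $u=v$, then use $\ell^u\subset\ell^v$ on disjoint supports) is the scalarization that fails for Lorentz norms.
\begin{itemize}
\item For disjoint pieces, $\|\sum_k h_k\|_{L^{q,v}}\lesssim(\sum_k\|h_k\|_{L^{q,v}}^v)^{1/v}$ requires $v\le q$.
\item Likewise $(\sum_k\|g\one_{E_k}\|_{L^{p,u}}^u)^{1/u}\lesssim\|g\|_{L^{p,u}}$ requires $u\ge p$.
\end{itemize}
Both failures are seen by testing equal-measure pieces in $N$ shells.

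When $q<p$, no argument using only uniform shell bounds and disjointness can work. Take unit-measure sets $F_k$ in $N$ distinct outer shells and the block-diagonal map $g\mapsto\sum_k\one_{F_k}\int_{F_k}g$. Its shell pieces have norm $\le1$ from $L^p$ to $L^q$. Yet it sends $\sum_k\one_{F_k}$, of norm $N^{1/p}$, to an output of norm $N^{1/q}$. Your distribution-function comparison with a rearranged shell piece is still shell-local, so it cannot supply the missing cross-shell gain.

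\textbf{What is needed.} You need a global pointwise domination of the entire comparable-radius operator, as in the paper:
\begin{itemize}
\item At $S=0$ the operator is dominated by $I_s$, and global Lorentz HLS applies.
\item For $S>0$ and $p<q$, on $|x-y|\lesssim|x|$ one has $|x|^{-S}|x-y|^{s-d}\lesssim|x-y|^{\delta-d}$ with $\delta=s-S=d(1/p-1/q)$. Global Lorentz HLS of order $\delta$ then applies.
\item For $p=q$ the operator is $\lesssim Mg$.
\item For $q<p$, dyadic integration gives $\mathcal C^{0}_{S,0}g\lesssim|x|^{-\theta}Mg$ with $\theta=S-s=d(1/q-1/p)$. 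Since $|x|^{-\theta}\in L^{d/\theta,\infty}(\R^d)$, the Lorentz maximal theorem and the O'Neil product inequality give $L^{p,u}\to L^{q,u}\hookrightarrow L^{q,v}$ for $u\le v$.
\end{itemize}
Treating $|x|^{-\theta}$ as one global weak-type factor, rather than as a constant on each shell, is what produces the cross-shell gain that shellwise summation cannot see.
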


Equivalently, the power conditions are
\begin{equation}\label{eq:power-expanded}
S\ge0,
\qquad
\alpha_0+\sigma_0\le\frac d{p'},
\qquad
\beta_0+\sigma_0\le\frac dq,
\end{equation}
\begin{equation}\label{eq:power-expanded-infty}
\alpha_\infty\ge s+\sigma_\infty-\frac dp,
\qquad
\beta_\infty\ge s+\sigma_\infty-\frac d{q'},
\end{equation}
and
\begin{equation}\label{eq:sandwich}
\alpha_0+\beta_0\le S\le\alpha_\infty+\beta_\infty.
\end{equation}

\begin{remark}[Decision rule]\label{rem:decision}
If all seven margins are positive, every pair $1\le u,v\le\infty$ is admissible.  If only local or scale equalities occur, the exact condition is $u\le v$.  An isolated input endpoint requires $u=1$; an isolated output endpoint requires $v=\infty$.  Endpoint requirements at different geometric ends simply accumulate.  At a same-side power/scale corner, \eqref{eq:fine-corner} overrides the separate rules and leaves only $(1,\infty)$.
\end{remark}

\begin{table}[t]
\centering
\caption{Exact fine-index rule for one deep end $j\in\{0,\infty\}$.  The row $A_j=B_j=0$ is impossible under \eqref{eq:margin-identities}.}
\label{tab:truth}
\begin{tabular}{cccc}
\toprule
$A_j$ & $B_j$ & $C_j$ & Fine-index condition\\
\midrule
$>0$ & $>0$ & $>0$ & none\\
$=0$ & $>0$ & $>0$ & $u=1$\\
$>0$ & $=0$ & $>0$ & $v=\infty$\\
$>0$ & $>0$ & $=0$ & $u\le v$\\
$=0$ & $>0$ & $=0$ & $(u,v)=(1,\infty)$\\
$>0$ & $=0$ & $=0$ & $(u,v)=(1,\infty)$\\
\bottomrule
\end{tabular}
\end{table}

\begin{corollary}[Strong spaces]\label{cor:strong}
Set $u=p$ and $v=q$.  Then \eqref{eq:main-estimate} holds if and only if
\[
S\ge0,
\]
\[
\alpha_0+\sigma_0<\frac d{p'},
\qquad
\beta_0+\sigma_0<\frac dq,
\]
\[
\alpha_\infty>s+\sigma_\infty-\frac dp,
\qquad
\beta_\infty>s+\sigma_\infty-\frac d{q'},
\]
and
\[
\alpha_0+\beta_0\le S\le\alpha_\infty+\beta_\infty
\quad\text{if }p\le q,
\]
whereas both scale inequalities are strict if $q<p$.
\end{corollary}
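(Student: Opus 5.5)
The corollary is a specialization of \cref{thm:main} to $u=p$ and $v=q$. The plan is to go through the fine-index rules \eqref{eq:fine-S}--\eqref{eq:fine-corner} one at a time with $(u,v)=(p,q)$, and read off which equalities in \eqref{eq:power-all} are still allowed. Nothing analytic needs to be redone; the work is a case check against the strict ranges $1<p,q<\infty$.

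\emph{Endpoint margins.} Since $p>1$, we have $u=p\neq1$. Rule \eqref{eq:fine-A} therefore forbids $A_0=0$ and $A_\infty=0$, so $A_0,A_\infty>0$, which is the strict form of the input inequalities in \eqref{eq:power-expanded}--\eqref{eq:power-expanded-infty}. Since $q<\infty$, we have $v=q\neq\infty$, and \eqref{eq:fine-B} likewise forces $B_0,B_\infty>0$, giving the strict output inequalities. With all four one-sided margins strictly positive, the corner rule \eqref{eq:fine-corner} is never triggered.

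\emph{Local and scale margins.} The remaining rules \eqref{eq:fine-S} and \eqref{eq:fine-C} both require $u\le v$, i.e. $p\le q$, whenever $S=0$ or some $C_j=0$.
\begin{itemize}
\item If $p\le q$, these equalities are allowed. We get $S\ge0$ and $C_0,C_\infty\ge0$, and by \eqref{eq:sandwich} the latter is $\alpha_0+\beta_0\le S\le\alpha_\infty+\beta_\infty$.
\item If $q<p$, then $C_0=C_\infty=0$ are excluded, so both scale inequalities are strict.
\end{itemize}

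\emph{Consistency of $S\ge0$ when $q<p$.} Rule \eqref{eq:fine-S} would also exclude $S=0$ when $q<p$, yet the corollary writes $S\ge0$. This needs a check, and the check is where I would be most careful. For $q<p$ we have $d/q-d/p>0$, hence $S=s+d/q-d/p>s>0$. So $S=0$ cannot occur, and the condition $S\ge0$ is automatically strict in that range; there is no conflict.

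\emph{Conclusion.} Conversely, if the listed conditions hold, then every applicable rule of \cref{thm:main} is satisfied with $(u,v)=(p,q)$, so the estimate holds. This gives the equivalence.
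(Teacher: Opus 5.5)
Your proposal is correct and follows the paper's own argument. You specialize \cref{thm:main} to $(u,v)=(p,q)$, note that $p>1$ and $q<\infty$ rule out the four one-sided equalities (so the corner rule never fires), and observe that scale equalities survive exactly when $p\le q$; your additional check that $S>s>0$ automatically when $q<p$, so that \eqref{eq:fine-S} never conflicts with the stated $S\ge0$, is a detail the paper leaves implicit.
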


\begin{corollary}[Free Lorentz HLS]\label{cor:free}
If $H=-\Delta$, all four weight exponents vanish, and $0<s<d$, then the power conditions force
\[
\frac1q=\frac1p-\frac sd.
\]
The estimate $I_s:\Lp p u\to\Lp q v$ holds exactly when $u\le v$.
\end{corollary}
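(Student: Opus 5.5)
We specialize \cref{thm:main} to $H=-\Delta$ and read off the conditions. For the free Laplacian the positive harmonic function is $U\equiv1$. Hence $\sigma_0=\sigma_\infty=0$, $s_{\rm clean}=d$, and by \cref{thm:clean-kernel} the kernel is $K_s^H(x,y)\simeq|x-y|^{s-d}$, so $H^{-s/2}$ is comparable to $I_s$. The heat-kernel hypothesis \eqref{eq:heat-kernel-assumption} is the classical Gaussian bound, since $\muU$ is Lebesgue measure. So \cref{thm:main} applies for every $0<s<d$ with $\alpha_0=\alpha_\infty=\beta_0=\beta_\infty=0$. Because $I_s$ has a nonnegative kernel and $K_s^H$ is exactly $c_{d,s}|x-y|^{s-d}$, the estimate for $H^{-s/2}$ is the estimate for $I_s$, with no loss from the two-sided comparison.

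With all weights zero, the margins in \eqref{eq:margins} become $C_0=S$ and $C_\infty=-S$. The power condition \eqref{eq:power-all} therefore forces $S\ge0$ and $-S\ge0$, i.e. $S=0$, which is $\frac1q=\frac1p-\frac sd$. (This is also the sandwich \eqref{eq:sandwich} collapsing to $0\le S\le0$.) The remaining margins are then $A_0=d/p'$, $B_0=d/q$, $A_\infty=d/p-s=d/q$ and $B_\infty=d/q'-s=d/p'$. They are strictly positive because $1<p,q<\infty$. So \eqref{eq:fine-A}, \eqref{eq:fine-B} and \eqref{eq:fine-corner} are vacuous. Only \eqref{eq:fine-S} and \eqref{eq:fine-C} are active, and both give exactly $u\le v$. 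Moreover $S=0$ together with $q<\infty$ forces $p<d/s$, so the index relation is consistent, and it then determines $q>p$. This gives both halves of the corollary.

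The argument is routine except for one check: we must verify that $-\Delta$ fits the framework of \cref{sec:setting}. This means confirming $U\equiv1\in A_2$ with exponents inside $(-d/2,d/2)$, and that $E_H(\{0\})=0$, which holds since $-\Delta$ has no $L^2$ kernel. We should also note that the hypothesis $d\ge2$ is inherited. Once \cref{thm:main} is granted, there is no real obstacle.
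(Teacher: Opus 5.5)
Your proposal is correct and follows the paper's own argument in its section on free and homogeneous models: with $U\equiv1$ and all weights zero, $C_0=S$ and $C_\infty=-S$ force $S=0$, the four one-sided margins are then strictly positive, and only the rule $u\le v$ survives. The paper also proves the same statement directly from rearrangements in \cref{cor:global-HLS}; that corollary is used inside the sufficiency proof of \cref{thm:main}, so your specialization is not circular.
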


\begin{corollary}[Globally homogeneous model]\label{cor:homogeneous}
Suppose $U(r)=r^{-\sigma}$ and
$\alpha_0=\alpha_\infty=\alpha$, $\beta_0=\beta_\infty=\beta$.  Then the scale conditions force
\[
\alpha+\beta=S.
\]
Put
\[
A=\frac d{p'}-\alpha-\sigma,
\qquad
B=\frac dq-\beta-\sigma.
\]
The estimate holds exactly when $S,A,B\ge0$, with $u\le v$ if $A,B>0$, and only $(u,v)=(1,\infty)$ if $A=0$ or $B=0$.
\end{corollary}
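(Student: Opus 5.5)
The plan is to specialize \cref{thm:main} to $\sigma_0=\sigma_\infty=\sigma$, $\alpha_0=\alpha_\infty=\alpha$, $\beta_0=\beta_\infty=\beta$ and read off the margins. Here $s_{\rm clean}=\min\{d,d-2\sigma\}$, and the kernel from \cref{thm:clean-kernel} is the homogeneous one.

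\emph{Margins.} Put $C:=S-\alpha-\beta$. Then $C_0=C$ and $C_\infty=-C$, so $C_0,C_\infty\ge0$ forces $C_0=C_\infty=0$, that is, $\alpha+\beta=S$. Clearly $A_0=A$ and $B_0=B$. For the infinity margins, the identity $A_\infty=\alpha-s-\sigma+\frac dp$ together with $\alpha+\beta=S$ gives
\[
A_\infty=S-\beta-s-\sigma+\frac dp=\frac dq-\beta-\sigma=B .
\]
Symmetrically,
\[
B_\infty=\beta-s-\sigma+\frac d{q'}=S-\alpha-s-\sigma+d-\frac dq=\frac d{p'}-\alpha-\sigma=A .
\]
So the seven conditions \eqref{eq:power-all} reduce to $S,A,B\ge0$ together with the forced equality $\alpha+\beta=S$. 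The identity \eqref{eq:margin-identities} reads $A+B=\gamma>0$, so $A$ and $B$ cannot both vanish.

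\emph{Fine indices.} Both scale equalities are active, so \eqref{eq:fine-C} always imposes $u\le v$. If $A,B>0$, only \eqref{eq:fine-C}, and possibly \eqref{eq:fine-S} (which is the same condition $u\le v$), apply. This yields exactly $u\le v$.

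Now suppose $A=0$. Then $A_0=0$ and $B_\infty=0$. Since $C_0=0$, the corner rule \eqref{eq:fine-corner} with $j=0$ forces $(u,v)=(1,\infty)$. The rules \eqref{eq:fine-A} and \eqref{eq:fine-B} give the same conclusion.

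If instead $B=0$, then $B_0=0$ and $A_\infty=0$, and \eqref{eq:fine-corner} again leaves only $(1,\infty)$. The pair $(1,\infty)$ satisfies every remaining rule, including $u\le v$ and \eqref{eq:fine-S}.

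The last clause of \cref{thm:main} states that there are no further restrictions. So this list is exactly the statement of the corollary.

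The only real point to check is the algebraic crossing $A_\infty=B$, $B_\infty=A$. It is what makes an input endpoint at the origin coincide with an output endpoint at infinity, and hence what produces the $(1,\infty)$ conclusion rather than just $u=1$ or $v=\infty$.
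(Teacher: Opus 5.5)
Your proposal is correct and follows the paper's route: you specialize \cref{thm:main}, use $C_0=-C_\infty$ to force $\alpha+\beta=S$, and read off the margins and fine-index rules, and you also make explicit the identities $A_\infty=B$, $B_\infty=A$ that the paper leaves implicit. One small correction to your closing remark: the crossing is what reduces the four one-sided conditions to $A,B\ge0$, but the $(1,\infty)$ conclusion already follows from the corner rule \eqref{eq:fine-corner} at $j=0$ alone, without the crossing.
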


\section{Ground-state geometry and the fractional kernel}\label{sec:kernel}

We write $X=|x|$, $Y=|y|$, and $r=|x-y|$ throughout this section.

\begin{lemma}[Weighted volume]\label{lem:volume}
For all $x\in\R^d$ and $\rho>0$,
\begin{equation}\label{eq:volume}
\muU(B(x,\rho))\simeq \rho^dU(|x|+\rho)^2.
\end{equation}
\end{lemma}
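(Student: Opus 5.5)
The plan is to reduce everything to the one-variable radial integral
\[
\muU(B(x,\rho))\simeq\int_{B(x,\rho)}U(|z|)^2\dd z ,
\]
where $U(r)^2\simeq r^{-2\sigma_0}$ for $r\le1$ and $U(r)^2\simeq r^{-2\sigma_\infty}$ for $r\ge1$.

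We first record that $U$ is \emph{doubling in the radial variable}: $U(t)\simeq U(t')$ whenever $t'/2\le t\le 2t'$. Also $U(t)^2\simeq t^{-2\sigma(t)}$ with $-2\sigma_j>-d$. From the power form with $\sigma_0,\sigma_\infty<d/2$ this gives
\[
\int_{|z|\le R}U(|z|)^2\dd z\simeq R^dU(R)^2 ,
\]
which is integrability at the origin. For $R\le1$ this is $\int_0^R r^{d-1-2\sigma_0}\dd r\simeq R^{d-2\sigma_0}$. For $R\ge1$ it equals $\simeq 1+(R^{d-2\sigma_\infty}-1)/(d-2\sigma_\infty)$. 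Since $d-2\sigma_\infty>0$, this is $\simeq R^{d-2\sigma_\infty}$ uniformly for $R\ge1$; the only care needed is near $R=1$, where both sides are $\simeq1$.

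Next we split into two cases according to the size of $\rho$ relative to $|x|$.

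\textbf{Case $\rho\ge |x|/2$.} Here $B(x,\rho)\subset B(0,3\rho)$, so the upper bound is
\[
\muU(B(x,\rho))\lesssim(3\rho)^dU(3\rho)^2\simeq\rho^dU(|x|+\rho)^2 ,
\]
because $|x|+\rho\simeq\rho$. For the lower bound, $B(x,\rho)$ contains a ball $B(z_0,\rho/4)$ with $|z_0|\simeq\rho$. One choice is $z_0=x+\tfrac{\rho}{2}e$, where $e$ is a unit vector with $e\cdot x\ge0$ (any $e$ if $x=0$). Then $|z_0|\in[\rho/2,\,3\rho]$, and on $B(z_0,\rho/4)$ we have $|z|\in[\rho/4,\,13\rho/4]$. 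By radial doubling, $U(|z|)^2\simeq U(\rho)^2$ on this ball, which gives the lower bound $\gtrsim\rho^dU(\rho)^2$.

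\textbf{Case $\rho<|x|/2$.} Every $z\in B(x,\rho)$ has $|z|\in(|x|/2,\,3|x|/2)$, so $U(|z|)\simeq U(|x|)\simeq U(|x|+\rho)$. Hence $\muU(B(x,\rho))\simeq\rho^dU(|x|+\rho)^2$ directly.

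The only genuine point is radial doubling with constants uniform across the transition at $r=1$. This follows because $U$ is continuous and positive, and both power laws hold up to $r=1$ with comparable values there. Concretely, for $t'\le1\le t\le2t'$ we have
\[
U(t)\simeq t^{-\sigma_\infty}\simeq1\simeq t'^{-\sigma_0}\simeq U(t').
\]
So the main obstacle is purely bookkeeping, and no use of $U^2\in A_2$ beyond the range $\sigma_j<d/2$ is needed.
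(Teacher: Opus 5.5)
Your proof is correct, and its skeleton matches the paper's. You use the same split at $\rho\simeq|x|/2$, the same freezing $U(|z|)\simeq U(|x|)$ when $\rho<|x|/2$, the same upper bound via $B(x,\rho)\subset B(0,3\rho)$, and the same centered-ball computation $\muU(B(0,R))\simeq R^dU(R)^2$ using $\sigma_0,\sigma_\infty<d/2$.

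The one genuine difference is the lower bound when $|x|<2\rho$. The paper records $B(0,\rho)\subset B(x,3\rho)$ and appeals to a ``standard finite-overlap comparison.'' That inclusion only bounds $\muU(B(x,3\rho))$ from below. Returning to radius $\rho$ therefore tacitly needs doubling of $\muU$, which is available from $U^2\in A_2$ but is not proved in the text.

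Your explicit sub-ball $B(z_0,\rho/4)\subset B(x,\rho)$, on which $|z|\in[\rho/4,13\rho/4]$, gives the lower bound directly from radial comparability of $U$. This makes your version self-contained. It also shows that only the upper restrictions $\sigma_j<d/2$ are used; neither $\sigma_j>-d/2$ nor the $A_2$ condition enters this lemma.
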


\begin{proof}
If $\rho\le X/2$, then $|z|\simeq X$ on $B(x,\rho)$.  The two-power comparability in \eqref{eq:U-powers}, including the bounded transition annulus, gives
\[
\muU(B(x,\rho))\simeq \rho^dU(X)^2\simeq \rho^dU(X+\rho)^2.
\]
If $X<2\rho$, then
\[
B(0,\rho)\subset B(x,3\rho),
\qquad
B(x,\rho)\subset B(0,3\rho),
\]
and a standard finite-overlap comparison reduces the assertion to centered balls.  For $R\le1$,
\[
\muU(B(0,R))\simeq \int_0^R t^{d-1-2\sigma_0}\dd t
\simeq R^{d-2\sigma_0}=R^dU(R)^2,
\]
where $\sigma_0<d/2$ is essential.  The same argument for $R\ge1$, splitting at $1$, gives
\[
\muU(B(0,R))\simeq R^{d-2\sigma_\infty}=R^dU(R)^2.
\]
The finitely many transition configurations are absorbed into the comparison constant.
\end{proof}

Combining \eqref{eq:heat-kernel-assumption} and \cref{lem:volume}, we obtain
\begin{equation}\label{eq:heat-kernel-euclidean}
p_t^H(x,y)\simeq
\frac{U(X)U(Y)}{t^{d/2}U(X+\sqrt t)U(Y+\sqrt t)}
\exp\!\left(-c\frac{r^2}{t}\right).
\end{equation}
Consequently, after $t=\rho^2$,
\begin{equation}\label{eq:fractional-integral-rho}
K_s^H(x,y)\simeq
U(X)U(Y)\int_0^\infty
\frac{\rho^{s-d}}{U(X+\rho)U(Y+\rho)}
\exp\!\left(-c\frac{r^2}{\rho^2}\right)\frac{\dd\rho}{\rho}.
\end{equation}
The two directions of comparison may use different $c>0$; this is harmless in all arguments below.

\begin{lemma}[Polynomial tempering]\label{lem:tempering}
There are constants $C,M>0$ such that for all $a,b>0$,
\begin{equation}\label{eq:tempering}
\frac{U(a)}{U(b)}\le C\max\left\{\left(\frac ab\right)^M,\left(\frac ba\right)^M\right\}.
\end{equation}
\end{lemma}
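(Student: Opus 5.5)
The plan is to reduce \eqref{eq:tempering} to a bound on the function $U$ alone, using the two-regime comparability \eqref{eq:U-powers}. Set $M=\max\{|\sigma_0|,|\sigma_\infty|\}$; any $M$ at least this large will do. By \eqref{eq:U-powers} there is a constant $C_1$ with
\[
C_1^{-1}\phi(r)\le U(r)\le C_1\phi(r),
\qquad
\phi(r)=r^{-\sigma_0}\one_{\{r\le1\}}+r^{-\sigma_\infty}\one_{\{r>1\}}.
\]
It therefore suffices to prove \eqref{eq:tempering} with $\phi$ in place of $U$, at the cost of the factor $C_1^2$. Note that $\phi$ is continuous at $r=1$, since both branches equal $1$ there.

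By symmetry in $a$ and $b$, assume $a\le b$; the right-hand side is symmetric under swapping. The key observation is that $\log\phi(e^\tau)$ is a continuous, piecewise linear function of $\tau=\log r$, with slope $-\sigma_0$ for $\tau\le0$ and slope $-\sigma_\infty$ for $\tau>0$. A continuous piecewise linear function whose slopes are all bounded by $M$ in absolute value is $M$-Lipschitz. Hence
\[
|\log\phi(a)-\log\phi(b)|\le M|\log a-\log b|,
\]
which gives $\phi(a)/\phi(b)\le (b/a)^M$.

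If one prefers to avoid the Lipschitz phrasing, the same bound follows by splitting into three cases: $a\le b\le1$, $1\le a\le b$, and $a\le1\le b$. In the mixed case the ratio factors as
\[
\frac{\phi(a)}{\phi(b)}=a^{-\sigma_0}\,b^{\sigma_\infty},
\]
and it is bounded by
\[
(1/a)^{|\sigma_0|}\,b^{|\sigma_\infty|}\le (b/a)^M,
\]
because both $1/a$ and $b$ are at least $1$ and each is at most $b/a$.

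There is no genuine obstacle here. The only point needing care is the mixed case straddling $r=1$, which is handled by passing through the point $1$ as above. Since $U$ is only comparable to $\phi$ rather than equal to it, the final constant is $C=C_1^2$.
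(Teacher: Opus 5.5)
Your proof is correct and follows essentially the paper's route: a pure power within each regime, and passage through the scale $r=1$ when $a$ and $b$ lie in different regimes. Your Lipschitz bound for $\tau\mapsto\log\phi(e^\tau)$ is just a compact way of packaging those three cases. Replacing $U$ by the exact model $\phi$ via the uniform comparability in \eqref{eq:U-powers} makes the paper's appeal to continuity on a compact transition interval unnecessary. One small point in your alternative mixed-case argument: the bound $(1/a)^{|\sigma_0|}b^{|\sigma_\infty|}\le(b/a)^M$ comes from $1/a,b\ge1$ together with $(1/a)\cdot b=b/a$, not from each factor being at most $b/a$.
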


\begin{proof}
Inside either pure regime the quotient is a fixed power up to constants.  If $a$ and $b$ lie in different regimes, insert the scale $1$.  Positivity and continuity on a compact transition interval supply the remaining uniform constants.
\end{proof}

\begin{lemma}[Dyadic first-term dominance]\label{lem:first-term}
Let
\[
F_{X,Y}(\rho)=\frac{\rho^{s-d}}{U(X+\rho)U(Y+\rho)}.
\]
Assume $0<s<s_{\rm clean}$.  Then, uniformly in $X,Y\ge0$ and $r>0$ with $|X-Y|\le r$,
\begin{equation}\label{eq:first-term}
\int_0^\infty F_{X,Y}(\rho)e^{-cr^2/\rho^2}\frac{\dd\rho}{\rho}
\simeq F_{X,Y}(r).
\end{equation}
\end{lemma}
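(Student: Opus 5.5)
The plan is to compare $F_{X,Y}(\rho)$ with $F_{X,Y}(r)$ through uniform power bounds on $\rho/r$. The Gaussian factor $e^{-cr^2/\rho^2}$ then controls the small scales $\rho<r$. The hypothesis $s<s_{\rm clean}$ controls the large scales $\rho>r$. I would write $g=1/U$, so that
\[
F_{X,Y}(\rho)=\rho^{s-d}g(X+\rho)g(Y+\rho),
\]
and split the integral into $\rho\le r$, $r\le\rho\le 2r$ and $\rho\ge r$.

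\emph{Lower bound.} Restrict the integral to $\rho\in[r,2r]$. There $e^{-cr^2/\rho^2}\ge e^{-c}$, and $\rho^{s-d}\simeq r^{s-d}$. Moreover $X+r\le X+\rho\le 2(X+r)$, and likewise for $Y$. By \cref{lem:tempering} with ratio at most $2$, we get $g(X+\rho)\simeq g(X+r)$ and $g(Y+\rho)\simeq g(Y+r)$. Hence this piece alone is $\gtrsim F_{X,Y}(r)\int_r^{2r}\dd\rho/\rho\simeq F_{X,Y}(r)$.

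\emph{Small scales $\rho<r$.} Here $X+\rho\le X+r\le (r/\rho)(X+\rho)$, so \cref{lem:tempering} gives $g(X+\rho)/g(X+r)\le C(r/\rho)^M$, and the same for $Y$. Hence
\[
F_{X,Y}(\rho)\le C\,F_{X,Y}(r)\,(r/\rho)^{d-s+2M}.
\]
Since $\int_0^r (r/\rho)^{N}e^{-cr^2/\rho^2}\dd\rho/\rho=\int_1^\infty \tau^{N}e^{-c\tau^2}\dd\tau/\tau<\infty$, this range contributes $\lesssim F_{X,Y}(r)$.

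\emph{Large scales $\rho>r$.} This is the step where the clean range enters, and I expect it to be the main point. I would first prove a refined one-sided monotonicity: for $a\ge b>0$,
\[
\frac{g(a)}{g(b)}\le C\Bigl(\frac ab\Bigr)^{\sigma^*},
\qquad
\sigma^*:=\max\{\sigma_0,\sigma_\infty\}.
\]
\begin{itemize}
\item If $a,b$ lie in the same regime, the quotient is $\simeq (a/b)^{\sigma_j}\le (a/b)^{\sigma^*}$.
\item If $b\le1\le a$, insert the scale $1$: $g(a)/g(b)\simeq a^{\sigma_\infty}b^{-\sigma_0}\le a^{\sigma^*}b^{-\sigma^*}$.
\item The compact transition region only changes constants.
\end{itemize}

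Now apply this with $a=X+\rho$ and $b=X+r$, noting $1\le a/b\le\rho/r$, and similarly for $Y$. If $\sigma^*\ge0$ this yields
\[
F_{X,Y}(\rho)\le C\,F_{X,Y}(r)(\rho/r)^{s-d+2\sigma^*},
\]
whose exponent is negative because $s<d-2\sigma_0$ and $s<d-2\sigma_\infty$. If $\sigma^*<0$, the $g$-quotients are bounded, and $F_{X,Y}(\rho)\le C F_{X,Y}(r)(\rho/r)^{s-d}$ with $s<d$. In either case $\int_r^\infty(\rho/r)^{-\varepsilon}\dd\rho/\rho<\infty$ for some $\varepsilon>0$, so this range also contributes $\lesssim F_{X,Y}(r)$.

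Combining the three ranges gives \eqref{eq:first-term} uniformly. The hypothesis $|X-Y|\le r$ is not needed beyond consistency with the kernel geometry. The only delicate point is keeping the constants in the one-sided power bound uniform across the transition annulus, which is handled by the piecewise regime argument above.
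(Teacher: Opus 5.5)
Your proof is correct. For the lower bound and for $\rho<r$ it is the paper's argument; integrating $(r/\rho)^{N}e^{-cr^2/\rho^2}\,\dd\rho/\rho$ directly instead of summing dyadic blocks makes no difference. The genuine difference is in the range $\rho\ge r$.

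The paper first uses $|X-Y|\le r\le\rho$ to get $X+\rho\simeq Y+\rho$. It then splits $a_k=F_{X,Y}(2^kr)$ into boundedly many runs, each in a single regime with its own decay rate $s-d$, $s-d+2\sigma_0$ or $s-d+2\sigma_\infty$, and chains the runs across the transition indices.

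You instead prove one global quasi-monotonicity bound, $g(a)/g(b)\lesssim (a/b)^{\max\{\sigma_0,\sigma_\infty,0\}}$ for $a\ge b$; the only mixed case is $b\le1\le a$. You apply it to $X$ and $Y$ separately via $1\le (X+\rho)/(X+r)\le\rho/r$. Since $d-2\max\{\sigma_0,\sigma_\infty,0\}=s_{\rm clean}$, this gives the single estimate $F_{X,Y}(\rho)\lesssim F_{X,Y}(r)(\rho/r)^{s-s_{\rm clean}}$ for all $\rho\ge r$. This makes transparent that $s<s_{\rm clean}$ is exactly what the tail needs. It is shorter, avoids the run bookkeeping, and shows that the hypothesis $|X-Y|\le r$ is not needed for the lemma at all.

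What your route gives up is regime information. Your decay rate is the worst of the three, uniformly in $\rho$. The paper's run decomposition instead identifies which configuration degenerates at each endpoint $s=d$, $s=d-2\sigma_0$, $s=d-2\sigma_\infty$: comparable radii in a fixed annulus, small comparable radii, and the large-time tail. That is the mechanism the paper reuses in the maximality part of \cref{thm:clean-kernel}, though the lemma itself does not need it.
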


\begin{proof}
For the lower bound, restrict to $r\le\rho\le2r$.  The Gaussian is bounded below, while \cref{lem:tempering} and bounded-scale comparability give
\[
F_{X,Y}(\rho)\simeq F_{X,Y}(r)
\]
on this interval.

For the upper bound on $0<\rho<r$, split into
\[
2^{-n-1}r<\rho\le2^{-n}r,
\qquad n\ge0.
\]
By \cref{lem:tempering}, the quotient of the $U$ factors is at most $C2^{Mn}$, whereas
\[
e^{-cr^2/\rho^2}\le e^{-c4^n}.
\]
Hence the sum of all sub-$r$ contributions is bounded by a convergent series times $F_{X,Y}(r)$.

For $\rho\ge r$, note that
\begin{equation}\label{eq:XY-comparable}
|X-Y|\le r\le\rho
\quad\Longrightarrow\quad
X+\rho\simeq Y+\rho.
\end{equation}
Set
\[
a_k=F_{X,Y}(2^kr),\qquad k\ge0.
\]
Apart from a bounded number of indices where one of $2^kr$, $X+2^kr$, $Y+2^kr$ is comparable to $1$, every maximal run lies in one of three regimes.  On such a run and for every $n\ge0$ remaining in the same regime,
\[
a_{k+n}\lesssim a_k2^{n(s-d)},
\]
or
\[
a_{k+n}\lesssim a_k2^{n(s-d+2\sigma_0)},
\]
or
\[
a_{k+n}\lesssim a_k2^{n(s-d+2\sigma_\infty)}.
\]
All three exponents are negative precisely under $s<s_{\rm clean}$.  Each run is therefore a decreasing geometric series, and the bounded transition sets can be absorbed into adjacent first terms.  Iterating through the finitely many possible regime changes gives
\[
\sum_{k\ge0}a_k\lesssim a_0.
\]
The integral over $\rho\ge r$ is comparable to this dyadic sum, proving the upper bound.
\end{proof}

\begin{proposition}[Fixed off-diagonal convergence]\label{prop:fixed-convergence}
For fixed $x\ne y$, the integral in \eqref{eq:fractional-integral-rho} is finite exactly when
\[
0<s<d-2\sigma_\infty.
\]
At $s=d-2\sigma_\infty$ the large-$\rho$ divergence is logarithmic.
\end{proposition}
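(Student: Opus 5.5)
The plan is to fix $x\ne y$, so that $X,Y\ge0$ and $r=|x-y|>0$ are frozen, and split the $\rho$-integral in \eqref{eq:fractional-integral-rho} at $\rho=r$ and at $\rho=R_0:=\max\{1,2X,2Y,r\}$. Each piece is then treated separately.

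\emph{Small $\rho$.} On $0<\rho<r$ the Gaussian factor $e^{-cr^2/\rho^2}$ kills everything. The dyadic argument from the proof of \cref{lem:first-term} applies verbatim: \cref{lem:tempering} bounds the $U$-quotients by $C2^{Mn}$, while the Gaussian contributes $e^{-c4^n}$. This part of that proof did not use $s<s_{\rm clean}$, so it is finite for every $s>0$. Note also that the factor $\rho^{s-d}$ remains integrable against the Gaussian near $0$ for any real $s$.

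\emph{Intermediate $\rho$.} On $r\le\rho\le R_0$ the integrand is continuous and positive. Since $U$ is positive and continuous on compact subsets of $(0,\infty)$, and $X+\rho$, $Y+\rho$ stay in a compact interval bounded away from $0$, this piece is finite and positive.

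\emph{Large $\rho$.} For $\rho\ge R_0$ we have $X+\rho\simeq\rho\simeq Y+\rho$, and all of these are $\ge1$. By \eqref{eq:U-powers},
\[
U(X+\rho)U(Y+\rho)\simeq\rho^{-2\sigma_\infty},
\]
and the Gaussian factor is comparable to $1$ because $r\le\rho$. The tail is therefore comparable, with constants depending only on the structural constants, to
\[
\int_{R_0}^\infty\rho^{s-d+2\sigma_\infty}\frac{\dd\rho}{\rho}.
\]
This integral is finite if and only if $s-d+2\sigma_\infty<0$, i.e.\ $s<d-2\sigma_\infty$. Since both directions of \eqref{eq:fractional-integral-rho} hold with possibly different $c$, and each direction is handled identically, finiteness of the integral (in either form) is equivalent to this condition.

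\emph{Endpoint.} At $s=d-2\sigma_\infty$ the tail integrand is $\simeq\rho^{-1}$. Truncating at $\rho\le T$ gives a contribution $\simeq\log(T/R_0)$, so the divergence is exactly logarithmic. For $s>d-2\sigma_\infty$ the divergence is polynomial, which confirms that the interval is exact.

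The only real care needed is in the large-$\rho$ comparison: one has to make sure the upper and lower bounds hold uniformly once $\rho\ge R_0$, including at the transition scale $1$. Choosing $R_0\ge1$ places $X+\rho$ and $Y+\rho$ in the pure $r^{-\sigma_\infty}$ regime, where they are comparable to $\rho$ within a factor of $3$, and this settles it. I expect no substantive obstacle beyond this bookkeeping.
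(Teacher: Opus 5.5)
Your proposal is correct and follows the paper's argument: the Gaussian suppresses every power as $\rho\downarrow0$, and for large $\rho$ one has $\rho^{s-d}/\bigl(U(X+\rho)U(Y+\rho)\bigr)\simeq\rho^{s-d+2\sigma_\infty}$, integrated against $\dd\rho/\rho$. Your explicit split at $r$ and at $R_0=\max\{1,2X,2Y,r\}$, with the compact middle range handled separately, is a more careful write-up of the paper's two-line sketch rather than a different route.
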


\begin{proof}
The Gaussian suppresses every power as $\rho\downarrow0$.  As $\rho\to\infty$,
\[
\frac{\rho^{s-d}}{U(X+\rho)U(Y+\rho)}\simeq
\rho^{s-d+2\sigma_\infty}.
\]
The claim follows by integration against $\dd\rho/\rho$.
\end{proof}

\begin{lemma}[Absence of a zero spectral projection]\label{lem:zero-mode}
Under \eqref{eq:U-powers}--\eqref{eq:heat-kernel-assumption}, one has $E_H(\{0\})=0$.
\end{lemma}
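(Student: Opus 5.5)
To show $E_H(\{0\})=0$ we use only the heat-kernel bound: the semigroup decays on a dense class of functions, and this is incompatible with a nonzero $L^2$ eigenfunction at $0$.

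Suppose $\phi\in L^2(\R^d)$ satisfies $H\phi=0$. Then $e^{-tH}\phi=\phi$ for every $t>0$. Since $E_H(\{0\})=\lim_{t\to\infty}e^{-tH}$ in the strong operator topology by the spectral theorem, it suffices to prove that $e^{-tH}f\to0$ in $L^2$ for all $f$ in a dense set, say $f\in C_c(\R^d)$ with $f\ge0$. By positivity of the kernel it is enough to treat $f=\one_{B(0,R)}$.

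The key step is a pointwise bound from \eqref{eq:heat-kernel-euclidean}. For $|y|\le R$ and $t\ge R^2$ we have $Y+\sqrt t\simeq\sqrt t$, so
\[
p_t^H(x,y)\lesssim \frac{U(X)U(Y)}{t^{d/2}U(X+\sqrt t)U(\sqrt t)}e^{-c|x-y|^2/t}.
\]
We then compute $\|e^{-tH}\one_{B(0,R)}\|_{L^2}^2$ by splitting into $X\le\sqrt t$ and $X>\sqrt t$. In the first region $U(X+\sqrt t)\simeq U(\sqrt t)$. In the second the Gaussian factor is essentially $e^{-cX^2/t}$, and by \cref{lem:tempering} the quotient $U(X)/U(X+\sqrt t)$ is bounded. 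The $y$-integral gives the constant $\int_{B(0,R)}U\simeq C_R$, which is finite because $\sigma_0<d$. This yields
\[
\|e^{-tH}\one_{B(0,R)}\|_2^2\lesssim C_R^2\, t^{-d}U(\sqrt t)^{-4}\Bigl(\int_{|x|\le\sqrt t}U(|x|)^2\dd x+t^{d/2}U(\sqrt t)^2\Bigr)\simeq C_R^2\,t^{-d/2}U(\sqrt t)^{-2},
\]
where we used \cref{lem:volume} for $\muU(B(0,\sqrt t))\simeq t^{d/2}U(\sqrt t)^2$. For large $t$ we have $U(\sqrt t)\simeq t^{-\sigma_\infty/2}$, so the bound is $\simeq t^{-(d-2\sigma_\infty)/2}$. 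This tends to $0$ because $\sigma_\infty<d/2$.

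Hence $e^{-tH}f\to0$ on a dense set. The operators $e^{-tH}$ are contractions, so $e^{-tH}\to0$ strongly, and therefore $E_H(\{0\})=0$.

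The main obstacle is the large-$X$ region. There $U(X)$ may grow (if $\sigma_\infty<0$) while $U(X+\sqrt t)$ must be handled uniformly. We treat this with \cref{lem:tempering} and the Gaussian, using $X+\sqrt t\le 2X$ to bound $U(X)/U(X+\sqrt t)\lesssim1$; the Gaussian then absorbs the tail, leaving the volume-type integral of order $t^{d/2}$. The rest of the argument is routine, and the strict inequality $\sigma_\infty<d/2$ from \eqref{eq:sigma-A2} is exactly what makes the decay exponent positive.
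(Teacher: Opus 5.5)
Your proof is correct, but it takes a heavier route than the paper.

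The paper proves only weak decay. For $f,g$ bounded with compact support, the variables $X,Y$ stay in a fixed compact set. So for $t\ge1$ one has $U(X+\sqrt t)\simeq U(Y+\sqrt t)\simeq t^{-\sigma_\infty/2}$, hence $p_t^H(x,y)\lesssim U(X)U(Y)\,t^{-(d-2\sigma_\infty)/2}$, which gives $\langle e^{-tH}f,g\rangle\to0$. Since $E_H(\{0\})$ is the strong, hence weak, limit of $e^{-tH}$, this forces $\langle E_H(\{0\})f,g\rangle=0$ on a dense set of pairs, so $E_H(\{0\})=0$. In that argument the region $|x|>\sqrt t$ never appears, so the Gaussian factor, the tempering lemma and the volume lemma are not needed.

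You instead bound $\|e^{-tH}\one_{B(0,R)}\|_2$ over all of $\R^d$, and that global estimate is what creates your ``main obstacle''. In return you get norm convergence directly, with the explicit rate $\|e^{-tH}\one_{B(0,R)}\|_2\lesssim C_R\,t^{-(d-2\sigma_\infty)/4}$. Even that rate needs no tail analysis: $\|e^{-tH}f\|_2^2=\langle e^{-2tH}f,f\rangle$ reduces it to the same compact-set pairing bound the paper uses.
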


\begin{proof}
For $t\ge1$, \eqref{eq:heat-kernel-euclidean} gives
\[
p_t^H(x,y)\lesssim U(X)U(Y)t^{-(d-2\sigma_\infty)/2}.
\]
Since $\sigma_\infty<d/2$, the right-hand side tends to zero.  If $f,g$ are bounded and compactly supported away from the origin, direct integration yields
\[
\langle e^{-tH}f,g\rangle\longrightarrow0.
\]
By the spectral theorem, $e^{-tH}$ converges strongly to $E_H(\{0\})$ as $t\to\infty$.  The stated test functions are dense in $L^2(\R^d)$, so $E_H(\{0\})=0$.
\end{proof}

\begin{proof}[Proof of \cref{thm:clean-kernel}]
By \cref{lem:zero-mode}, the Mellin formula \eqref{eq:Mellin-def} represents $H^{-s/2}$ without a zero-mode contribution.  Formula \eqref{eq:fractional-integral-rho} and \cref{lem:first-term} give
\[
K_s^H(x,y)\simeq
U(X)U(Y)\frac{r^{s-d}}{U(X+r)U(Y+r)},
\]
which is \eqref{eq:clean-kernel-main}.

It remains to justify maximality.  First fix $X,Y$ in a compact annulus and let $r\downarrow0$.  On $r<\rho<1$ the denominator is bounded above and below, so the contribution is comparable to
\[
\int_r^1\rho^{s-d}\frac{\dd\rho}{\rho}.
\]
At $s=d$ this equals $\log(1/r)$, while the proposed first term is constant; for $s>d$ the scale-one endpoint dominates.  Thus $s<d$ is necessary.

Next take $X,Y,r$ mutually comparable and tending to zero.  On $r<\rho<1$ the contribution is comparable to
\[
U(X)U(Y)\int_r^1\rho^{s-d+2\sigma_0}\frac{\dd\rho}{\rho}.
\]
At $s=d-2\sigma_0$ an extra logarithm appears, and above this value the upper endpoint rather than $\rho\simeq r$ controls the integral.  Finally, \cref{prop:fixed-convergence} shows that $s<d-2\sigma_\infty$ is necessary.  This proves maximality of the stated uniform open range.
\end{proof}

\begin{remark}\label{rem:negative-sigma-clean}
If $\sigma_0\le0$, then $d-2\sigma_0\ge d$, so the origin restriction is redundant.  If $\sigma_\infty\le0$, then $d-2\sigma_\infty\ge d$, so the infinity restriction is also redundant once $s<d$.  In particular, if both exponents are negative, the clean formula holds exactly for $0<s<d$.
\end{remark}

\section{Lorentz preliminaries and local fractional integration}\label{sec:lorentz}

For a measurable function $h$, write
\[
\mu_h(\lambda)=|\{x:|h(x)|>\lambda\}|,
\qquad
h^*(t)=\inf\{\lambda>0:\mu_h(\lambda)\le t\}.
\]
For $1<r<\infty$ and $1\le a<\infty$, we use
\begin{equation}\label{eq:Lorentz-norm}
\|h\|_{L^{r,a}}
=\left(\int_0^\infty [t^{1/r}h^*(t)]^a\frac{\dd t}{t}\right)^{1/a},
\end{equation}
and
\begin{equation}\label{eq:weak-norm}
\|h\|_{L^{r,\infty}}=\sup_{t>0}t^{1/r}h^*(t).
\end{equation}
These conventions give $L^{r,r}=L^r$ with equivalent norms.  We use fine-index conjugates
\[
\frac1a+\frac1{a'}=1,
\qquad
1'=\infty,
\qquad
\infty'=1.
\]
The basic structural facts below are standard; see \cite{Lorentz1950,ONeil1963,Hunt1966,BennettSharpley1988}.

\begin{lemma}[Elementary Lorentz facts]\label{lem:Lorentz-facts}
Let $1<r<\infty$ and $1\le a,b\le\infty$.
\begin{enumerate}[label=\textup{(\roman*)}]
\item If $0<|E|<\infty$, then
\[
\|\one_E\|_{L^{r,a}}=
\begin{cases}
(r/a)^{1/a}|E|^{1/r},&a<\infty,\\
|E|^{1/r},&a=\infty.
\end{cases}
\]
\item At fixed primary exponent,
\[
L^{r,a}\hookrightarrow L^{r,b}
\quad\Longleftrightarrow\quad a\le b.
\]
\item On a finite-measure set, if $r_1>r_0$, then
\[
L^{r_1,a}\hookrightarrow L^{r_0,b}
\]
for arbitrary fine indices $a,b$.
\item The \Kothe{} associate space is
\[
(L^{r,a})'=L^{r',a'}.
\]
For $a=\infty$, this is an associate-space statement, not an identification of the full Banach dual.
\end{enumerate}
\end{lemma}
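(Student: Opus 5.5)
We prove the four items directly from the definitions \eqref{eq:Lorentz-norm}--\eqref{eq:weak-norm}, working throughout with the decreasing rearrangement. For (i), the rearrangement of $\one_E$ is $\one_{[0,|E|)}$. So for $a<\infty$,
\[
\|\one_E\|_{L^{r,a}}^a=\int_0^{|E|}t^{a/r-1}\dd t=\frac ra|E|^{a/r},
\]
and for $a=\infty$ the supremum of $t^{1/r}$ over $t<|E|$ equals $|E|^{1/r}$.

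For (ii), sufficiency is the standard monotonicity of Lorentz norms in the fine index.
\begin{itemize}
\item Since $h^*$ is decreasing, for every $t>0$
\[
t^{1/r}h^*(t)\le\Bigl(\frac ar\Bigr)^{1/a}\Bigl(\int_0^t[s^{1/r}h^*(s)]^a\frac{\dd s}{s}\Bigr)^{1/a}\lesssim\|h\|_{L^{r,a}},
\]
which gives $L^{r,a}\hookrightarrow L^{r,\infty}$.
\item For $a<b<\infty$, write $[t^{1/r}h^*]^b=[t^{1/r}h^*]^{b-a}[t^{1/r}h^*]^a$. Bounding the first factor by the previous estimate yields $\|h\|_{L^{r,b}}^b\lesssim\|h\|_{L^{r,a}}^{b-a}\|h\|_{L^{r,a}}^a$.
\end{itemize}
For necessity when $a>b$, we use a lacunary test function $h=\sum_k c_k\one_{E_k}$ with $|E_k|=2^{kr}$ nested and with the level steps arranged so that $t^{1/r}h^*(t)$ is comparable to $c_k$ on the $k$-th dyadic block. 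Then $\|h\|_{L^{r,a}}\simeq\|(c_k)\|_{\ell^a}$, and the failure of $\ell^a\hookrightarrow\ell^b$ transfers. The step needing care is keeping $h^*$ decreasing: we take $h^*=\sum_k c_k2^{-k}\one_{[2^{(k-1)r},2^{kr})}$ with $c_k2^{-k}$ decreasing, which is achieved by choosing $c_k$ of slow growth, for instance $c_k=k^{-1/b}$ truncated.

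For (iii), let the support have measure $m<\infty$, so $h^*$ vanishes beyond $m$. First,
\[
h^*(t)\le t^{-1/r_1}\|h\|_{L^{r_1,\infty}}\lesssim t^{-1/r_1}\|h\|_{L^{r_1,a}}
\]
by (ii). Hence
\[
\|h\|_{L^{r_0,b}}^b\lesssim\|h\|_{L^{r_1,a}}^b\int_0^m t^{b(1/r_0-1/r_1)-1}\dd t<\infty,
\]
since $1/r_0-1/r_1>0$. The case $b=\infty$ is analogous, using $\sup_{t<m}t^{1/r_0-1/r_1}$.

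Item (iv) is where the real content lies, and we intend to cite \cite{BennettSharpley1988} together with \cite{Hunt1966} rather than reprove it. The sketch runs as follows.
\begin{itemize}
\item Upper bound. Hardy--Littlewood gives $\int|fg|\le\int_0^\infty f^*g^*\dd t$. Writing the integrand as $t^{1/r}f^*\cdot t^{1/r'}g^*\,\dd t/t$ and applying Hölder in $\ell^a,\ell^{a'}$ with respect to $\dd t/t$ shows $L^{r',a'}\subset(L^{r,a})'$.
\item Reverse inclusion. It suffices to test against decreasing radial $f$, where the associate norm reduces to a weighted Hardy-type duality on $(0,\infty)$. For $1<a<\infty$ this is the duality of $L^a(t^{a/r-1}\dd t)$ on the cone of decreasing functions. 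This works because $1<r<\infty$, so that the Hardy operator $t^{-1}\int_0^t$ is bounded on the relevant weighted spaces, which lets us replace $g^*$ by $g^{**}$.
\item The endpoints $a=1$ and $a=\infty$ follow by the same computation with $\sup$ in place of the integral.
\end{itemize}
Finally, we record explicitly that for $a=\infty$ the Köthe associate is not the Banach dual, since $L^{r,\infty}$ has nonzero singular functionals. This is exactly the caveat in the statement, and it is why the paper later defines the operator on $L^{p,\infty}$ by monotone truncation rather than by duality.
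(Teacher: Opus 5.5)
Your argument is correct, but it follows a different route from the paper, because the paper gives no proof of this lemma. It simply calls the four facts standard and cites Lorentz, O'Neil, Hunt and Bennett--Sharpley. You instead derive (i)--(iii) directly from the rearrangement definitions:
\begin{enumerate}[label=\textup{(\alph*)}]
\item the explicit integral for $\one_E$;
\item the pointwise bound $t^{1/r}h^*(t)\le(a/r)^{1/a}\|h\|_{L^{r,a}}$, followed by splitting the integrand, for the inclusion in (ii);
\item a lacunary profile for the failure when $a>b$;
\item the weak-type bound $h^*(t)\le t^{-1/r_1}\|h\|_{L^{r_1,\infty}}$ integrated over $(0,m)$ for (iii).
\end{enumerate}
You defer only (iv), and your sketch of it is sound: H\"older after Hardy--Littlewood gives one inclusion, cone duality produces a $g^{**}$ expression, and Hardy's inequality, which is exactly where $r<\infty$ enters, returns to $g^*$.

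Your version shows where each hypothesis is used: finite measure and $1/r_0>1/r_1$ in (iii), $r<\infty$ in (iv), and the exact constant in (i). Your necessity test in (ii) is the same mechanism as the paper's later \cref{lem:geometric-Lorentz}, namely $\|J_rc\|_{L^{r,a}}\simeq\|c\|_{\ell^a}$ on geometric shells. Its proof uses only the distribution formula, so you could quote it to transfer the failure of $\ell^a\hookrightarrow\ell^b$.

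Two small slips need fixing.
\begin{enumerate}[label=\textup{(\arabic*)}]
\item With nested $E_k$, the function $\sum_kc_k\one_{E_k}$ does not have the rearrangement you display. Moreover, your displayed profile vanishes on $[0,1)$, so it is not decreasing. Take instead $h=\sum_{k=1}^Nc_k2^{-k}\one_{E_k\setminus E_{k-1}}$ with $E_0=\emptyset$; the choice $c_k=1$ already suffices.
\item In (iv), the reduction is not to radially decreasing test functions. By resonance of Lebesgue measure, it is to $\sup\{\int_0^\infty f^*g^*\dd t:\|f\|_{L^{r,a}}\le1\}$, that is, to test functions arranged like $g$.
\end{enumerate}
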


\begin{lemma}[Lorentz product inequality]\label{lem:product}
Suppose
\[
\frac1c=\frac1a+\frac1b<1,
\qquad
\frac1t=\frac1r+\frac1z\le1.
\]
Then
\begin{equation}\label{eq:product}
\|fg\|_{L^{c,t}}
\lesssim
\|f\|_{L^{a,r}}\|g\|_{L^{b,z}}.
\end{equation}
The same conclusion holds with any larger target fine index.
\end{lemma}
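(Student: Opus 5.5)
The plan is to prove \eqref{eq:product} in two steps: a pointwise rearrangement inequality, followed by a weighted H\"older inequality in the variable $t$ with respect to the measure $\dd t/t$. To avoid a clash with the fine index $t$, I will call the rearrangement variable $\tau$.

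The first step is the classical estimate
\[
(fg)^*(\tau)\le f^*(\tau/2)\,g^*(\tau/2).
\]
This follows from the inclusion
\[
\{|fg|>\lambda\mu\}\subset\{|f|>\lambda\}\cup\{|g|>\mu\}.
\]
Taking $\lambda=f^*(\tau/2)$ and $\mu=g^*(\tau/2)$, the union has measure at most $\tau$, so by the definition of $h^*$ we get $(fg)^*(\tau)\le\lambda\mu$. Since $1/c=1/a+1/b$, we can write
\[
\tau^{1/c}(fg)^*(\tau)\le 2^{1/c}\bigl[(\tau/2)^{1/a}f^*(\tau/2)\bigr]\bigl[(\tau/2)^{1/b}g^*(\tau/2)\bigr].
\]

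The second step takes the $L^t(\dd\tau/\tau)$ norm of both sides and applies H\"older's inequality with exponents $r/t$ and $z/t$. These are admissible because $1/t=1/r+1/z$; the cases $r=\infty$ or $z=\infty$ are handled by pulling out a supremum. The measure $\dd\tau/\tau$ is dilation invariant, so the substitution $\tau\mapsto2\tau$ leaves the norms unchanged. This gives
\[
\|fg\|_{L^{c,t}}\le 2^{1/c}\|f\|_{L^{a,r}}\|g\|_{L^{b,z}}.
\]
Note that $L^{c,t}$ is the functional defined in \eqref{eq:Lorentz-norm} and \eqref{eq:weak-norm}. It is only a quasinorm in general, but the inequality is stated in terms of that same functional, so this causes no difficulty. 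The hypothesis $c>1$ guarantees $1<c<\infty$, which is the range in which the paper's definition applies. The hypothesis $t\ge1$ is not actually needed for the computation; it only keeps the target inside the paper's convention $1\le a\le\infty$.

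The final sentence of the lemma, about a larger target fine index, follows from the monotone embedding $L^{c,t}\hookrightarrow L^{c,t_1}$ for $t\le t_1$ in \cref{lem:Lorentz-facts}(ii).

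Every step is elementary. The only point needing care is the bookkeeping when one fine index is infinite, where H\"older degenerates to a supremum bound, and the case $t=\infty$, which forces $r=z=\infty$ and is immediate from the pointwise estimate.
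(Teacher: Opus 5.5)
Your proposal is correct and follows essentially the same route as the paper: the rearrangement bound $(fg)^*(2\tau)\le f^*(\tau)g^*(\tau)$ (which you write equivalently with $\tau/2$ and also justify via the level-set inclusion), then H\"older with respect to $\dd\tau/\tau$, then \cref{lem:Lorentz-facts}(ii) for a larger target fine index. Your extra remarks are accurate bookkeeping that the paper leaves implicit: dilation invariance of $\dd\tau/\tau$, the constant $2^{1/c}$, the infinite-index cases, and the quasinorm caveat.
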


\begin{proof}
The rearrangement inequality
\[
(fg)^*(2\tau)\le f^*(\tau)g^*(\tau)
\]
gives
\[
\tau^{1/c}(fg)^*(2\tau)
\le [\tau^{1/a}f^*(\tau)][\tau^{1/b}g^*(\tau)].
\]
Apply H\"older's inequality on $(0,\infty)$ with measure $\dd\tau/\tau$.  The weak cases follow by the usual supremum interpretation, and a larger target fine index follows from \cref{lem:Lorentz-facts}(ii).
\end{proof}

We shall also use the Hardy--Littlewood maximal operator.  The rearrangement estimate
\begin{equation}\label{eq:max-rearrangement}
(Mh)^*(t)\lesssim h^{**}(t),
\qquad
h^{**}(t)=\frac1t\int_0^t h^*(r)\dd r,
\end{equation}
and the one-dimensional Hardy inequality imply
\begin{equation}\label{eq:max-Lorentz}
\|Mh\|_{L^{p,u}}\lesssim\|h\|_{L^{p,u}},
\qquad 1<p<\infty,
\quad 1\le u\le\infty.
\end{equation}

\subsection{The local truncated Riesz theorem}

Let $A_1,A_2$ be fixed nondegenerate bounded annuli.  Let $\chi$ be a bounded cutoff supported in
\[
A_2\times A_1\cap\{|x-y|<c_0\}
\]
and bounded below by a positive constant on two smaller overlapping annuli for all sufficiently small $|x-y|$.  Define
\begin{equation}\label{eq:local-Riesz}
\mathcal I_{s,\chi}g(x)=\int_{\R^d}\chi(x,y)|x-y|^{s-d}g(y)\dd y.
\end{equation}

\begin{theorem}[Exact local Lorentz theorem]\label{thm:local}
The map
\[
\mathcal I_{s,\chi}:L^{p,u}\longrightarrow L^{q,v}
\]
is bounded if and only if
\[
S=s+\frac dq-\frac dp\ge0,
\]
and, when $S=0$, one additionally has $u\le v$.  If $S>0$, every pair $u,v$ is allowed.
\end{theorem}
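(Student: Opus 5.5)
The proof splits into sufficiency for $S>0$, sufficiency for $S=0$ with $u\le v$, necessity of $S\ge0$, and necessity of $u\le v$ when $S=0$.

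\textbf{Sufficiency for $S>0$.} Since everything lives on bounded annuli, I would first reduce to the scaling-critical exponent. Define $\tilde q$ by $1/\tilde q=1/p-s/d$ when $s<d/p$. Then $S>0$ means $q<\tilde q$, and \cref{lem:Lorentz-facts}(iii) on the finite-measure set $A_2$ shows that any $L^{\tilde q,\infty}$ bound implies an $L^{q,v}$ bound for every $v$.

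If $s\ge d/p$, the kernel $|x-y|^{s-d}\one_{|x-y|<c_0}$ lies in $L^{p',\infty}$ locally, or in $L^\infty$. In that case I would bound $\mathcal I_{s,\chi}$ directly into $L^\infty(A_2)$ by H\"older's inequality with $L^{p,1}$ replaced by $L^{p,u}$ after lowering $p$ slightly. That lowering is allowed by \cref{lem:Lorentz-facts}(iii) applied on $A_1$.

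In every case, using (iii) on the input side lets me trade $L^{p,u}$ for $L^{p_1,\infty}$ with $p_1<p$ slightly smaller, while $S$ stays positive. So it suffices to prove $L^{p_1,\infty}\to L^{q_1,\infty}$ at the critical exponent for $p_1$. That is the weak-type HLS bound, which follows from the critical case below with $u=v=\infty$.

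\textbf{Sufficiency for $S=0$, $u\le v$.} Here I would use the pointwise Hedberg bound
\[
|\mathcal I_{s,\chi}g(x)|\lesssim \int_{|x-y|<c_0}|x-y|^{s-d}|g(y)|\dd y,
\]
and then the O'Neil rearrangement inequality for convolution with $k=|\cdot|^{s-d}\in L^{d/(d-s),\infty}$:
\[
(k*g)^{**}(t)\lesssim t\,k^{**}(t)g^{**}(t)+\int_t^\infty k^*(\tau)g^*(\tau)\dd\tau .
\]
Inserting $k^*(\tau)\simeq\tau^{(s-d)/d}$ and applying the two Hardy inequalities on $(0,\infty)$ with measure $\dd t/t$ gives $L^{p,u}\to L^{q,u}$ for every $u$, including $u=\infty$ via a supremum. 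Then \cref{lem:Lorentz-facts}(ii) upgrades this to $L^{q,v}$ for $v\ge u$. The cutoff $\chi$ is bounded, so it only helps.

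\textbf{Necessity of $S\ge0$.} Test on $g=\one_{B(y_0,\delta)}$ with $y_0$ in the region where $\chi$ is bounded below. On $B(y_0,\delta)$ one has $\mathcal I_{s,\chi}g\gtrsim\delta^{s}$. By \cref{lem:Lorentz-facts}(i) this gives $\delta^{s+d/q}\lesssim\delta^{d/p}$, and letting $\delta\to0$ forces $S\ge0$.

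\textbf{Necessity of $u\le v$ at $S=0$.} This is the main obstacle. I would use a lacunary multiscale test function
\[
g=\sum_k c_k 2^{kd/p}\one_{B(y_k,2^{-k})},
\]
where the centers $y_k$ accumulate at a fixed point, or are nested concentric balls with widely separated scales $2^{-N k}$. The superlevel sets at different scales are then essentially disjoint, so $\|g\|_{L^{p,u}}\simeq\|(c_k)\|_{\ell^u}$.

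The output dominates $\sum_k c_k 2^{kd/q}\one_{B(y_k,2^{-k})}$ pointwise, up to constants, because positivity of the kernel prevents cancellation. With the same disjoint-level argument this gives $\|\mathcal I g\|_{L^{q,v}}\gtrsim\|(c_k)\|_{\ell^v}$. Boundedness would therefore force $\ell^u\hookrightarrow\ell^v$, that is, $u\le v$.

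The delicate part is making the norm comparisons two-sided despite overlapping tails. Choosing the scales super-lacunary, with $N$ large, makes each piece's level set dominate its neighbours. Alternatively, I would use disjoint balls with separated centers and compute the rearrangements directly.
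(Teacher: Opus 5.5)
Your proposal is correct and follows essentially the paper's proof. Your O'Neil bound plus the two Hardy inequalities is the paper's rearrangement bound for $(\mathcal I_{s,\chi}g)^*$ followed by Young's inequality on the logarithmic line. You reduce $S>0$ to a critical pair by strict-exponent embeddings on the bounded annuli. For necessity you use the same small-ball test and geometric packets, where only an upper input bound and a lower output bound are needed, so the ``overlapping tails'' you worry about cause no difficulty.

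One slip: your side route into $L^\infty(A_2)$ fails at $s=d/p$. There the kernel lies only in $L^{p',\infty}$, and lowering $p$ demands even more integrability. Your general reduction covers that case, and all others, provided you choose $p_1$ with $\max\{s/d,1/p\}<1/p_1<\min\{1,s/d+1/q\}$ rather than merely ``slightly'' below $p$.
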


\begin{proof}
Assume first $S=0$, so
\begin{equation}\label{eq:critical-relation}
\frac1q=\frac1p-\frac sd.
\end{equation}
Splitting the kernel at the radius whose ball has measure $t$ and applying the Hardy--Littlewood rearrangement inequality gives
\begin{equation}\label{eq:Riesz-rearrangement}
(\mathcal I_{s,\chi}g)^*(t)
\lesssim
t^{s/d-1}\int_0^t g^*(r)\dd r
+
\int_t^{|A_1|}g^*(r)r^{s/d-1}\dd r.
\end{equation}
The second integral is interpreted as zero for $t>|A_1|$.  Put
\[
G(r)=r^{1/p}g^*(r).
\]
Multiply \eqref{eq:Riesz-rearrangement} by $t^{1/q}$ and set $t=e^\tau$, $r=e^\rho$.  Using \eqref{eq:critical-relation}, the two terms become one-sided convolutions of $G(e^\rho)$ against
\[
e^{-(1-1/p)(\tau-\rho)}\one_{\{\rho<\tau\}}
\quad\text{and}\quad
e^{-(1/q)(\rho-\tau)}\one_{\{\rho>\tau\}}.
\]
Both kernels belong to $L^1(\R)$.  Young's inequality on the logarithmic line therefore gives
\begin{equation}\label{eq:local-same-fine}
\|\mathcal I_{s,\chi}g\|_{L^{q,u}}
\lesssim \|g\|_{L^{p,u}}
\end{equation}
for every $1\le u\le\infty$.  The embedding $L^{q,u}\hookrightarrow L^{q,v}$ proves sufficiency when $u\le v$.

To prove necessity of $u\le v$, choose a point in the overlap and geometrically shrinking, mutually disjoint pairs of balls $E_k,F_k$ such that
\[
|E_k|\simeq|F_k|\simeq r_k^d,
\qquad
\dist(E_k,F_k)\simeq r_k,
\qquad
r_k=L^{-k},
\]
with $L$ sufficiently large.  For finitely supported $c=(c_k)$ put
\[
g=\sum_k c_kr_k^{-d/p}\one_{E_k}.
\]
Geometric rearrangement yields
\begin{equation}\label{eq:local-packet-in}
\|g\|_{L^{p,u}}\simeq\|c\|_{\ell^u}.
\end{equation}
On $F_k$, positivity and the local lower bound give
\[
\mathcal I_{s,\chi}g\gtrsim c_kr_k^{-d/q}.
\]
Hence
\begin{equation}\label{eq:local-packet-out}
\|\mathcal I_{s,\chi}g\|_{L^{q,v}}
\gtrsim\|c\|_{\ell^v}.
\end{equation}
Taking $c_k=1$ for $1\le k\le N$ forces $N^{1/v}\lesssim N^{1/u}$.

Assume next that $S>0$.  Since $s<d$,
\[
\frac Sd=\frac sd+\frac1q-\frac1p
<\frac1{p'}+\frac1q.
\]
Choose $a,b>0$ such that
\[
a+b=\frac Sd,
\qquad
a<\frac1{p'},
\qquad
b<\frac1q.
\]
Define
\[
\frac1{p_0}=\frac1p+a,
\qquad
\frac1{q_0}=\frac1q-b.
\]
Then $1<p_0<p$, $q<q_0<\infty$, and
\[
\frac1{q_0}=\frac1{p_0}-\frac sd.
\]
The critical estimate gives $L^{p_0,r}\to L^{q_0,r}$ for any fixed $r$.  Since both annuli have finite measure, strict primary-exponent embeddings yield
\[
L^{p,u}(A_1)\hookrightarrow L^{p_0,r}(A_1),
\qquad
L^{q_0,r}(A_2)\hookrightarrow L^{q,v}(A_2)
\]
for arbitrary $u,v$.

Finally, if $S<0$, take $g=\one_{B(z,\varepsilon)}$ in the overlap.  On a concentric ball of comparable radius,
\[
\mathcal I_{s,\chi}g\gtrsim\varepsilon^s.
\]
The norm ratio is bounded below by a constant times $\varepsilon^S$, which diverges as $\varepsilon\downarrow0$.
\end{proof}

\begin{corollary}[Global Lorentz HLS]\label{cor:global-HLS}
Let $0<\lambda<d$, $1<p<d/\lambda$, and $1/q=1/p-\lambda/d$.  Then
\[
I_\lambda:L^{p,u}(\R^d)\longrightarrow L^{q,v}(\R^d)
\]
is bounded exactly when $u\le v$.
\end{corollary}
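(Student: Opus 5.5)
Sufficiency for $u\le v$ comes from the rearrangement bound \eqref{eq:Riesz-rearrangement} with $\chi\equiv1$ and $|A_1|=\infty$, and necessity from a packet construction as in \cref{thm:local}. By O'Neil's convolution rearrangement inequality applied to $|x|^{\lambda-d}$, whose decreasing rearrangement is $\simeq t^{\lambda/d-1}$, one gets
\[
(I_\lambda g)^*(t)\lesssim t^{\lambda/d-1}\int_0^t g^*(r)\dd r+\int_t^\infty g^*(r)r^{\lambda/d-1}\dd r .
\]
In the critical relation the second term converges for $g\in L^{p,u}$: in the logarithmic variables its kernel is $e^{-(\rho-\tau)/q}$, integrable on $\rho>\tau$, and $\lambda/d-1+1/p'=-(1/q)\cdot$... the exponents combine exactly because $1/q=1/p-\lambda/d$. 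Multiplying by $t^{1/q}$ and substituting $t=e^\tau$, $r=e^\rho$, $G(\rho)=e^{\rho/p}g^*(e^\rho)$, both terms become one-sided convolutions of $G$ with the $L^1(\R)$ kernels $e^{-(\tau-\rho)/p'}\one_{\rho<\tau}$ and $e^{-(\rho-\tau)/q}\one_{\rho>\tau}$. Here $p>1$ and $q<\infty$ are needed, and $p<d/\lambda$ guarantees $q<\infty$. Young's inequality on $L^u(\R,\dd\tau)$ then gives $L^{p,u}\to L^{q,u}$ for every $1\le u\le\infty$. The embedding of \cref{lem:Lorentz-facts}(ii) upgrades this to $L^{q,v}$ when $u\le v$.

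For the weak case $u=\infty$, we have to check that $I_\lambda g$ is defined for $g\in L^{p,\infty}$. Since $g^*(r)\lesssim r^{-1/p}$, both integrals above are finite because $p<d/\lambda$. So the positive operator $I_\lambda|g|$ is finite almost everywhere, via monotone truncation, and the bound holds.

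For necessity of $u\le v$, I would argue exactly as in \cref{thm:local}, now using full space. Since the kernel is positive and exactly homogeneous, I can also let the scales grow. Take disjoint pairs of balls $E_k,F_k$ of radius $r_k=L^{-k}$, $1\le k\le N$, with $\dist(E_k,F_k)\simeq r_k$, placed far apart or nested around a point with $L$ large. Set $g=\sum_k r_k^{-d/p}\one_{E_k}$. Geometric separation of the level-set measures gives $\|g\|_{L^{p,u}}\simeq N^{1/u}$, as in \eqref{eq:local-packet-in}. On $F_k$, positivity yields $I_\lambda g\gtrsim r_k^{\lambda-d/p}=r_k^{-d/q}$. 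Because these lower bounds sit on disjoint sets of geometrically separated measures, $\|I_\lambda g\|_{L^{q,v}}\gtrsim N^{1/v}$. Boundedness therefore forces $N^{1/v}\lesssim N^{1/u}$ for all $N$, that is, $u\le v$.

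The main point needing care is the geometric-rearrangement equivalence used twice: the $\ell^u$ scalarization for packets whose heights and measures are lacunary. This works because for lacunary $|E_k|$ with heights $r_k^{-d/p}$, the rearrangement $g^*$ is comparable to $r_k^{-d/p}$ on $t\simeq r_k^d$. The Lorentz quasi-norm is then comparable to an $\ell^u$ sum, provided $L$ is taken large enough that the cumulative measures $\sum_{j\le k}|E_j|$ are dominated by the last term. The same reasoning applied to the lower bound $I_\lambda g\ge\sum_k c r_k^{-d/q}\one_{F_k}$ gives the output estimate.
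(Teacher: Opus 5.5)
Your proposal is correct and follows the paper's proof essentially verbatim. You use the rearrangement bound \eqref{eq:Riesz-rearrangement} with $|A_1|$ replaced by $\infty$, which you justify via O'Neil's inequality. The two one-sided $L^1$ kernels on the logarithmic line and Young's inequality then give $L^{p,u}\to L^{q,u}$, and the embedding yields $L^{q,v}$ for $u\le v$. For necessity, the geometric packet test via \cref{lem:geometric-Lorentz} gives $N^{1/v}\lesssim N^{1/u}$. The only blemish is the garbled half-sentence about exponents in your first paragraph, but the kernels you end up with, $e^{-(\tau-\rho)/p'}\one_{\{\rho<\tau\}}$ and $e^{-(\rho-\tau)/q}\one_{\{\rho>\tau\}}$, are the right ones.
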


\begin{proof}
The rearrangement estimate is the same as \eqref{eq:Riesz-rearrangement}, with $|A_1|$ replaced by $\infty$.  The logarithmic-line convolution proof gives $L^{p,u}\to L^{q,u}$, and the packet lower test proves necessity.
\end{proof}

\section{Rank-one endpoints and geometric annular sequences}\label{sec:annular}

\subsection{Rank-one operators}

\begin{lemma}[Exact rank-one criterion]\label{lem:rank-one}
Let $a,b$ be nonzero measurable functions and
\[
R_{b,a}g(x)=b(x)\int_{\R^d}a(y)g(y)\dd y.
\]
Then
\begin{equation}\label{eq:rank-one-criterion}
R_{b,a}:L^{p,u}\longrightarrow L^{q,v}
\quad\Longleftrightarrow\quad
a\in L^{p',u'},\quad b\in L^{q,v}.
\end{equation}
Moreover,
\[
\|R_{b,a}\|\simeq
\|a\|_{L^{p',u'}}\|b\|_{L^{q,v}}.
\]
\end{lemma}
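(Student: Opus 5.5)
The norm of $R_{b,a}g$ factorizes exactly:
\[
\|R_{b,a}g\|_{L^{q,v}}=\Bigl|\int a g\Bigr|\,\|b\|_{L^{q,v}}.
\]
So the whole statement reduces to identifying
\[
\sup\Bigl\{\Bigl|\int ag\Bigr| : \|g\|_{L^{p,u}}\le1\Bigr\}
\]
with $\|a\|_{L^{p',u'}}$ up to constants. That identification is the K\"othe associate-space statement \cref{lem:Lorentz-facts}(iv).

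\textbf{Sufficiency.} Assume $a\in L^{p',u'}$ and $b\in L^{q,v}$. By the H\"older-type inequality underlying (iv),
\[
\Bigl|\int ag\Bigr|\le\int_0^\infty a^*(t)g^*(t)\dd t\lesssim \|a\|_{L^{p',u'}}\|g\|_{L^{p,u}}.
\]
This follows from Hardy--Littlewood and then H\"older on $(0,\infty)$ with $\dd t/t$ applied to $t^{1/p'}a^*\cdot t^{1/p}g^*$. It gives $\|R_{b,a}\|\lesssim\|a\|_{L^{p',u'}}\|b\|_{L^{q,v}}$.

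\textbf{Necessity.} Assume $R_{b,a}$ is bounded. Because $a\ne0$, there is a set $E$ of finite positive measure on which $|a|$ is bounded and bounded below. Testing with $g=\overline{\operatorname{sgn} a}\,\one_E$, which lies in $L^{p,u}$ by \cref{lem:Lorentz-facts}(i), gives $\int ag>0$. Boundedness then forces $b\in L^{q,v}$, and in fact gives
\[
\Bigl|\int ag\Bigr|\,\|b\|_{L^{q,v}}\le\|R\|\,\|g\|_{L^{p,u}}
\quad\text{for all } g.
\]
Dividing by $\|b\|_{L^{q,v}}>0$ shows that the linear functional $g\mapsto\int ag$ is bounded on $L^{p,u}$ with norm at most $\|R\|/\|b\|$.

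It remains to show that this functional bound forces $a\in L^{p',u'}$ with comparable norm. The plan is to avoid Banach duality (the paper warns against it when $u=\infty$) and instead test directly against extremizers built from $a$. Truncate by setting
\[
a_N=a\one_{\{|x|<N,\ |a|<N\}},
\]
and choose $g=\overline{\operatorname{sgn} a}\,\phi(|a_N|)$, where $\phi$ is chosen so that $g^*$ is a function of $a_N^*$:
\begin{itemize}
\item For $1<u<\infty$, take $g^*(t)\approx t^{u'/p'-1}a_N^*(t)^{u'-1}$, the standard associate extremizer.
\item For $u=1$, $u'=\infty$: for each $t_0$, take $g=\overline{\operatorname{sgn} a}\,\one_{F}$, where $F$ is a level set of $|a_N|$ of measure $t_0$. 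This gives $t_0\,a_N^{*}(t_0)\lesssim\int_F|a|\lesssim\|R\|\,t_0^{1/p}$.
\item For $u=\infty$, $u'=1$: take $g^*(t)=t^{-1/p}$ composed with the rearrangement of $a_N$. This gives $\int_0^\infty a_N^*\,t^{-1/p}\dd t\lesssim\|R\|$.
\end{itemize}
In each case $\int a_N g=\int a_N^*g^*$ because $g$ is equimeasurably aligned with $a_N$. Comparing with $\|g\|_{L^{p,u}}$ yields $\|a_N\|_{L^{p',u'}}\lesssim\|R\|/\|b\|$ uniformly in $N$. Monotone convergence in $N$ then gives the bound for $a$ itself.

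Multiplying this by $\|b\|$ gives the lower bound $\|a\|\,\|b\|\lesssim\|R\|$. Together with sufficiency, this proves the norm equivalence.

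\textbf{Main obstacle.} The delicate step is this aligned-extremizer construction, particularly the endpoint $u=\infty$. There one must check that $g$, defined through the distribution of $a_N$, genuinely satisfies $g^*(t)\le t^{-1/p}$ and is measurable. Ties in level sets of $|a_N|$ have to be handled, either by splitting level sets using nonatomicity of Lebesgue measure or by a standard approximation.
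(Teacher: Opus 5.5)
Your architecture matches the paper's:
\begin{itemize}
\item the exact factorization $\|R_{b,a}g\|_{L^{q,v}}=|\int ag|\,\|b\|_{L^{q,v}}$;
\item sufficiency by the rearrangement H\"older inequality;
\item one nontrivial test function forcing $b\in L^{q,v}$;
\item identification of the norm of $g\mapsto\int ag$ on $L^{p,u}$ with $\|a\|_{L^{p',u'}}$.
\end{itemize}
At the last step the paper simply invokes \cref{lem:Lorentz-facts}(iv). That statement is exactly about $\sup\{\int|ag|:\|g\|_{L^{p,u}}\le1\}$, and it holds for $u=\infty$ too. The caveat attached to (iv) concerns the full Banach dual of weak $L^p$, which you never use. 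So your proof is complete once you cite (iv), as you did at the start. The hand-built extremizers are an unnecessary detour.

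That detour has a step that does not work as written. For $1<u<\infty$ the profile $h(t)=t^{u'/p'-1}a_N^*(t)^{u'-1}$ is nonincreasing only when $u\ge p$. If $1<u<p$, then $u'>p'$ and $t^{u'/p'-1}$ is increasing. For example, if $|a_N|$ is a multiple of an indicator, $h$ increases on the support. Then $h$ is not a decreasing rearrangement, and no $g$ that is a function of $|a_N|$ has $g^*=h$. Suppose you realize $g=h\circ\kappa$, with $\kappa$ measure preserving and $|a_N|=a_N^*\circ\kappa$. You still get $\int|a_N|g=\int a_N^*h=\|a_N\|_{L^{p',u'}}^{u'}$, but now $g^*=h^*\ne h$. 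The bound $\|g\|_{L^{p,u}}^u\lesssim\|a_N\|_{L^{p',u'}}^{u'}$ is then exactly what remains unproved. This is the range where \eqref{eq:Lorentz-norm} for $L^{p',u'}$ is only a quasi-norm.

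Either of two repairs closes the gap.

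\emph{First repair.} Keep $h$. Since $h(t)t^{-\gamma}$ is nonincreasing with $\gamma=u'/p'-1$, one gets $\{h>2^{-\gamma}\lambda\}\supset(t_\lambda/2,t_\lambda)$, where $t_\lambda=\sup\{t:h(t)>\lambda\}$. This yields $\|h^*\|_{L^{p,u}}\lesssim\bigl(\int_0^\infty(t^{1/p}h(t))^u\,\dd t/t\bigr)^{1/u}$.

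\emph{Second repair.} This one is simpler and uniform in $u$: test with aligned decreasing steps $g^*=\sum_kc_k\one_{(0,2^k)}$.
\begin{itemize}
\item A geometric Hardy inequality gives $\|g\|_{L^{p,u}}\lesssim\|(2^{k/p}c_k)_k\|_{\ell^u}$.
\item On the other side, $\int a_N^*g^*\ge\sum_k(2^{k/p}c_k)(2^{k/p'}a_N^*(2^k))$.
\item Duality in $\ell^u$ then gives $\|(2^{k/p'}a_N^*(2^k))_k\|_{\ell^{u'}}\lesssim\|R_{b,a}\|/\|b\|_{L^{q,v}}$.
\item The left side is comparable to $\|a_N\|_{L^{p',u'}}$ for every $1\le u\le\infty$.
\end{itemize}

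Your $u=1$ and $u=\infty$ tests, the truncation $a_N$, and the monotone-convergence passage are fine.
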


\begin{proof}
Sufficiency follows from Lorentz H\"older and \cref{lem:Lorentz-facts}(iv).  For necessity, choose a sequence in the unit ball of $L^{p,u}$ that norms the positive functional $g\mapsto\int |a|g$.  Since $b\ne0$, boundedness forces that functional to belong to the \Kothe{} associate $L^{p',u'}$.  Taking one $g$ for which the functional is nonzero then forces $b\in L^{q,v}$.
\end{proof}

The four separated factors arising from the kernel are
\begin{equation}\label{eq:rank-factors}
\begin{aligned}
a_0(y)&=|y|^{-\alpha_0-\sigma_0}\one_{\{|y|<1/4\}},
&b_0(x)&=|x|^{-\beta_0-\sigma_0}\one_{\{|x|<1/4\}},\\
a_\infty(y)&=|y|^{s-d+\sigma_\infty-\alpha_\infty}\one_{\{|y|>4\}},
&b_\infty(x)&=|x|^{s-d+\sigma_\infty-\beta_\infty}\one_{\{|x|>4\}}.
\end{aligned}
\end{equation}
The power $|x|^{-d/r}$, either near the origin or at infinity, belongs to $L^{r,\infty}$ and to no $L^{r,a}$ with $a<\infty$.  Consequently,
\begin{equation}\label{eq:rank-endpoints}
A_0=0\text{ or }A_\infty=0\Longrightarrow u=1,
\qquad
B_0=0\text{ or }B_\infty=0\Longrightarrow v=\infty.
\end{equation}
For later necessity arguments, a critical input density truncated on $e^{-N}<|x|<e^{-1}$ has $L^{p,u}$ norm comparable to $N^{1/u}$, while its pairing with the critical associate density is comparable to $N$.  After normalization this leaves $N^{1-1/u}$ and forces $u=1$.  The output statement follows directly from the Lorentz norm of the critical power.

\subsection{Geometric shell packets}

Let $(E_k)$ be disjoint measurable shells on a half-line of indices, with
\begin{equation}\label{eq:geometric-measures}
|E_k|\simeq R_k^d,
\qquad
R_k=L^k,
\qquad L>1.
\end{equation}
For a scalar sequence $c=(c_k)$ define
\begin{equation}\label{eq:Jr}
J_rc=\sum_k c_k|E_k|^{-1/r}\one_{E_k}.
\end{equation}

\begin{lemma}[Geometric annular Lorentz norm]\label{lem:geometric-Lorentz}
For $1<r<\infty$ and $1\le a\le\infty$,
\begin{equation}\label{eq:geometric-equivalence}
\|J_rc\|_{L^{r,a}}\simeq\|c\|_{\ell^a}.
\end{equation}
\end{lemma}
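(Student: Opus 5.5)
We compute the decreasing rearrangement of $h=J_rc$ directly and then compare the resulting Lorentz integral with a weighted sum. Note that $L$ is fixed and $R_k=L^k$ is increasing; if the index half-line runs toward $-\infty$ (shells shrinking to the origin), the argument is the same with $L$ replaced by $L^{-1}$. We may assume $c_k\ge0$. The value of $h$ on $E_k$ is $c_k|E_k|^{-1/r}\simeq c_kL^{-kd/r}$.

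\textbf{Upper bound.} This is the main point, because the values $c_k|E_k|^{-1/r}$ need not be monotone in $k$, so $h^*$ reorders the shells. The key observation is the geometric growth of the measures. For $\lambda>0$,
\[
\mu_h(\lambda)=\sum_{k:\,c_k|E_k|^{-1/r}>\lambda}|E_k| .
\]
Because $|E_k|\simeq L^{kd}$ grows geometrically, this sum is comparable to its largest term $|E_{k(\lambda)}|$, where $k(\lambda)$ is the largest admissible index. Hence $\mu_h(\lambda)\le C\max\{|E_k|: c_k|E_k|^{-1/r}>\lambda\}$. Now fix $t\simeq L^{jd}$. We claim
\[
h^*(t)\lesssim \sup_{k\ge j-m}c_k|E_k|^{-1/r}
\]
for a fixed integer $m$. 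Indeed, if $\lambda$ exceeds this supremum, then only indices $k<j-m$ can contribute, so $\mu_h(\lambda)\le C L^{(j-m)d}\le t$. Multiplying by $t^{1/r}$ gives
\[
t^{1/r}h^*(t)\lesssim \sup_{k\ge j-m} c_k L^{(j-k)d/r}.
\]
The weight $L^{(j-k)d/r}$ decays geometrically as $k-j\to\infty$, so this supremum is bounded by a discrete convolution $\sum_k c_k\,\kappa(j-k)$ with $\kappa\in\ell^1(\mathbb Z)$. Since $h^*$ is monotone, the Lorentz integral $\int(t^{1/r}h^*)^a\,\dd t/t$ is comparable to the dyadic-in-$L^{d}$ sum over $j$. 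Young's inequality on $\mathbb Z$ then gives $\|h\|_{L^{r,a}}\lesssim\|c\|_{\ell^a}$, and the case $a=\infty$ is handled the same way.

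\textbf{Lower bound.} For each $k$, the set $\{h\ge c_k|E_k|^{-1/r}\}$ contains $E_k$, so $h^*(\tau)\ge c_k|E_k|^{-1/r}$ for $\tau<|E_k|$. This alone gives $\|h\|_{L^{r,a}}\gtrsim\sup_k c_k$, which suffices for $a=\infty$. For $a<\infty$, we use a disjointification argument. Order the sequence $d_k=c_k|E_k|^{-1/r}$ decreasingly; $h^*$ is the step function with these values on consecutive intervals of lengths $|E_k|$. Since $t\mapsto t^{a/r}$ is increasing,
\[
\|h\|_{L^{r,a}}^a=\frac ra\sum_{\text{ordered}}d_k^a\Big[\big(T_k+|E_k|\big)^{a/r}-T_k^{a/r}\Big],
\]
where $T_k$ is the total measure placed before $E_k$. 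Each bracket is at least $|E_k|^{a/r}$ when $a\ge r$. In general the bracket is $\ge c\,|E_k|^{a/r}$ whenever $T_k\lesssim|E_k|$. The case $T_k\gg |E_k|$ requires more care, and this is where the main effort goes. Here we can use the following alternative: compare directly with $\int_{T_k}^{T_k+|E_k|}$, or, more simply, invoke duality. By \cref{lem:Lorentz-facts}(iv), pair $h$ against $J_{r'}b$ with $b\in\ell^{a'}$:
\[
\int h\,J_{r'}b=\sum_k c_kb_k .
\]
The upper bound already proved, applied with the exponents $(r',a')$, gives $\|J_{r'}b\|_{L^{r',a'}}\lesssim\|b\|_{\ell^{a'}}$. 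Hence
\[
\|c\|_{\ell^a}=\sup_b\frac{\sum_k c_kb_k}{\|b\|_{\ell^{a'}}}\lesssim\|h\|_{L^{r,a}}
\]
by Lorentz H\"older. This duality route is the one I would actually use, since it avoids the rearrangement bookkeeping entirely. The only delicate step of the whole proof is therefore the geometric-sum control of $\mu_h$ in the upper bound.
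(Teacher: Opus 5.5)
Your proof is correct, but it takes a different route from the paper. The paper proves both inequalities at once from the distribution-function formula $\|h\|_{L^{r,a}}^a=r\int_0^\infty\lambda^{a-1}\mu_h(\lambda)^{a/r}\dd\lambda$. It discretizes dyadically in $\lambda$, uses the geometric-subset comparability $(\sum_{k\in F}m_k)^\theta\simeq\sum_{k\in F}m_k^\theta$ for every $\theta>0$, and interchanges the sums to land exactly on $\sum_k|c_k|^a$. You instead separate the two directions. For the upper bound you bound $h^*$ at the times $t=L^{jd}$, using that a superlevel set is dominated by its largest shell, and reduce to Young's inequality $\ell^a*\ell^1\to\ell^a$ on $\Z$ with the one-sided kernel $\kappa(n)=L^{nd/r}\one_{\{n\le m\}}$. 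For the lower bound with $a<\infty$ you pair $h$ against $J_{r'}b$, note that $\int hJ_{r'}b=\sum_kc_kb_k$, and apply Lorentz H\"older together with your upper bound at the conjugate exponents $(r',a')$; this includes $(r',\infty)$ when $a=1$, which your $a=\infty$ upper bound covers. The paper's route is symmetric: one geometric comparability gives both directions, and it handles the nontrivial direction for $a<r$ (reverse subadditivity of $x\mapsto x^{a/r}$) directly through the geometry. Your route needs only the one-sided geometric estimate $\mu_h(\lambda)\lesssim\max\{|E_k|:c_k|E_k|^{-1/r}>\lambda\}$. The reverse inequality then comes free by duality, so you were right to drop the incomplete bracket computation in the case $T_k\gg|E_k|$; it is not needed. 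One minor point: the remark about replacing $L$ by $L^{-1}$ for shells shrinking to the origin is unnecessary. With $R_k=L^k$ the measures $|E_k|\simeq L^{kd}$ increase in $k$ on either half-line, and that is all your argument uses.
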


\begin{proof}
Assume first $a<\infty$.  The distribution formula
\[
\|h\|_{L^{r,a}}^a
=r\int_0^\infty\lambda^{a-1}\mu_h(\lambda)^{a/r}\dd\lambda
\]
and \eqref{eq:Jr} give
\[
\mu_{J_rc}(\lambda)=
\sum_{\{k:|c_k||E_k|^{-1/r}>\lambda\}}|E_k|.
\]
Put $m_k=|E_k|$, $b_k=|c_k|m_k^{-1/r}$, and choose $n_k\in\Z$ with
$2^{n_k}\le b_k<2^{n_k+1}$.  Dyadic discretization in $\lambda$ yields
\begin{equation}\label{eq:dist-discrete}
\|J_rc\|_{L^{r,a}}^a
\simeq
\sum_{n\in\Z}2^{na}
\left(\sum_{k:n_k\ge n}m_k\right)^{a/r}.
\end{equation}
Because the $m_k$ form a geometric family, for every subset $F$ and every $\theta>0$,
\begin{equation}\label{eq:geometric-subset}
\left(\sum_{k\in F}m_k\right)^\theta
\simeq_\theta
\sum_{k\in F}m_k^\theta.
\end{equation}
Indeed, both sides are comparable to the $\theta$-power of the largest member of the geometric set.  Apply \eqref{eq:geometric-subset} with $\theta=a/r$ in \eqref{eq:dist-discrete}, interchange sums, and obtain
\[
\|J_rc\|_{L^{r,a}}^a
\simeq
\sum_km_k^{a/r}\sum_{n\le n_k}2^{na}
\simeq
\sum_km_k^{a/r}2^{n_ka}
\simeq
\sum_k|c_k|^a.
\]
For $a=\infty$, the distribution formula and geometric summability give
\[
\sup_{\lambda>0}\lambda\mu_{J_rc}(\lambda)^{1/r}
\simeq\sup_k|c_k|.
\]
\end{proof}

\begin{remark}[No arbitrary scalarization]\label{rem:no-scalarization}
The geometric hypothesis is essential.  In general,
\[
\|h\|_{L^{p,u}}
\not\simeq
\left\|\big(\|h\one_{E_k}\|_{L^{p,u}}\big)_k\right\|_{\ell^u}.
\]
For equal-measure subsets in $N$ different shells, the left side carries the primary multiplicity $N^{1/p}$, whereas the right side carries $N^{1/u}$.  We use \cref{lem:geometric-Lorentz} for normalized geometric packets and for shell-constant conditional expectations, never as a general local decomposition formula.
\end{remark}

\begin{lemma}[Diagonal multipliers]\label{lem:diagonal}
For a scalar sequence $D=(D_k)$, multiplication $M_D:\ell^u\to\ell^v$ is bounded exactly when
\begin{equation}\label{eq:diag-condition}
\begin{cases}
D\in\ell^\infty,&u\le v,\\
D\in\ell^\rho,\quad \displaystyle\frac1\rho=\frac1v-\frac1u,&v<u.
\end{cases}
\end{equation}
On the origin half-line $k\le0$, the multiplier $D_k=2^{kC}$ is bounded $\ell^u\to\ell^v$ exactly when either $C>0$, or $C=0$ and $u\le v$.
\end{lemma}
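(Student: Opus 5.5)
The lemma has two parts: a general criterion for diagonal multipliers, then its specialization to the geometric multiplier $D_k=2^{kC}$ on $k\le0$.

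\emph{Sufficiency in the general criterion.} Both cases come from elementary sequence inequalities.
If $u\le v$ and $D\in\ell^\infty$, then
\[
\|Dc\|_{\ell^v}\le\|D\|_{\ell^\infty}\|c\|_{\ell^v}\le\|D\|_{\ell^\infty}\|c\|_{\ell^u},
\]
by the nesting $\ell^u\hookrightarrow\ell^v$.
If $v<u$, H\"older's inequality with $1/v=1/\rho+1/u$ gives
\[
\|Dc\|_{\ell^v}\le\|D\|_{\ell^\rho}\|c\|_{\ell^u}.
\]
This covers $u=\infty$ with $\rho=v$.

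\emph{Necessity in the general criterion.}
Testing on unit vectors $e_k$ gives $|D_k|\le\|M_D\|$, so $D\in\ell^\infty$ is always necessary. This settles the case $u\le v$.
For $v<u$, I would use a standard duality argument (the main step, though still routine).
Fix a finite set $F$ of indices, and put $c_k=|D_k|^{\rho/u}$ for $k\in F$, $c_k=0$ otherwise.
Then $\|c\|_{\ell^u}=(\sum_F|D_k|^\rho)^{1/u}$.
Since $1+\rho/u=\rho/v$, we get $|D_kc_k|^v=|D_k|^{\rho}$, so
\[
\|Dc\|_{\ell^v}=\Bigl(\sum_F|D_k|^\rho\Bigr)^{1/v}.
\]
Boundedness then forces
\[
\Bigl(\sum_F|D_k|^\rho\Bigr)^{1/v-1/u}\le\|M_D\|,
\qquad\text{that is,}\qquad
\Bigl(\sum_F|D_k|^\rho\Bigr)^{1/\rho}\le\|M_D\|.
\]
Letting $F$ exhaust the index set gives $D\in\ell^\rho$.
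When $u=\infty$, the same test works with $c_k=1$ on $F$, since then $\rho=v$.

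\emph{The specialization.} On the half-line $k\le0$, take $D_k=2^{kC}$.
\begin{enumerate}[label=\textup{(\roman*)},leftmargin=2.4em]
\item If $C<0$, the sequence is unbounded, so $M_D$ is never bounded.
\item If $C=0$, then $D\equiv1$ lies in $\ell^\infty$ but in no $\ell^\rho$ with $\rho<\infty$. Hence $M_D$ is bounded exactly when $u\le v$.
\item If $C>0$, then $\sum_{k\le0}2^{kC\rho}<\infty$ for every $\rho$, and $D\in\ell^\infty$. So $M_D$ is bounded for every pair $(u,v)$.
\end{enumerate}
This is precisely the stated dichotomy.
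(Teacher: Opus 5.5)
Your proof is correct and follows essentially the same route as the paper: the embedding $\ell^u\hookrightarrow\ell^v$ with point masses when $u\le v$, H\"older for sufficiency and finite truncations of $|D_k|^{\rho/u}$ for necessity when $v<u$, and then the immediate specialization to $2^{kC}$ on $k\le0$. You also write out the exponent bookkeeping ($1+\rho/u=\rho/v$) and the $u=\infty$ case, which the paper leaves implicit.
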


\begin{proof}
If $u\le v$, use $\ell^u\hookrightarrow\ell^v$ and point masses for necessity.  If $v<u$, H\"older proves sufficiency with $1/\rho=1/v-1/u$.  Necessity follows by testing finite truncations of the sequence $|D_k|^{\rho/u}$ with the appropriate sign.  The geometric half-line statement follows immediately.
\end{proof}

\section{The exact triangular annular theorem}\label{sec:triangular}

The separated part of a deep origin or infinity block is governed by a one-sided triangular matrix.  We formulate the origin version on
\[
\Z_-:=\{k\in\Z:k\le0\}.
\]
For $A,B,C\in\R$ and a finitely supported sequence $c$ define
\begin{equation}\label{eq:triangular-op}
(\mathsf H_{A,B,C}c)_k
=
2^{kC}\sum_{\ell\le k}2^{-A(k-\ell)}c_\ell
+
\sum_{\ell\ge k}2^{-B(\ell-k)}2^{\ell C}c_\ell.
\end{equation}

\begin{theorem}[Exact triangular sequence theorem]\label{thm:triangular}
Let $1\le u,v\le\infty$.  The operator
\[
\mathsf H_{A,B,C}:\ell^u(\Z_-)\longrightarrow\ell^v(\Z_-)
\]
is bounded if and only if $A,B,C\ge0$ and the applicable row of the following table holds:
\begin{center}
\begin{tabular}{cccc}
\toprule
$A$ & $B$ & $C$ & exact fine-index condition\\
\midrule
$>0$ & $>0$ & $>0$ & none\\
$=0$ & $>0$ & $>0$ & $u=1$\\
$>0$ & $=0$ & $>0$ & $v=\infty$\\
$=0$ & $=0$ & $>0$ & $(u,v)=(1,\infty)$\\
$>0$ & $>0$ & $=0$ & $u\le v$\\
$\ge0$ & $\ge0$ & $=0$ & $(u,v)=(1,\infty)$ if $AB=0$\\
\bottomrule
\end{tabular}
\end{center}
\end{theorem}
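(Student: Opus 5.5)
We reduce to nonnegative finitely supported sequences, since $\mathsf H_{A,B,C}$ has a positive matrix. We write $\mathsf H_{A,B,C}=\mathsf P+\mathsf Q$ with
\[
\mathsf P=D_C\,T_A,\qquad \mathsf Q=T^{*}_B\,D_C .
\]
Here $D_C$ is the diagonal multiplier $c_k\mapsto 2^{kC}c_k$ of \cref{lem:diagonal}, $T_A$ is the causal convolution on $\Z_-$ with kernel $2^{-An}\one_{\{n\ge0\}}$, and $T^*_B$ is the anticausal convolution with kernel $2^{-Bn}\one_{\{n\ge0\}}$. The proof then runs through the lower tests, the generic sufficiency, and the degenerate rows in that order.

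\emph{Lower tests for $A,B,C\ge0$.} We test point masses $\delta_\ell$, $\ell\to-\infty$.
\begin{itemize}[leftmargin=2em]
\item The diagonal entry $(\mathsf H\delta_\ell)_\ell\ge 2^{\ell C}$ is unbounded if $C<0$.
\item The entry $(\mathsf H\delta_\ell)_0\ge 2^{A\ell}$ is unbounded if $A<0$.
\item The entries $(\mathsf H\delta_0)_k\ge 2^{Bk}$, $k\le0$, are unbounded as $k\to-\infty$ if $B<0$.
\end{itemize}
The same tests also give the endpoint fine-index restrictions.
\begin{itemize}[leftmargin=2em]
\item If $B=0$, then $(\mathsf H\delta_0)_k\ge1$ for all $k\le0$. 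The output is not in $\ell^v$ unless $v=\infty$.
\item If $A=0$, then $(\mathsf Hc)_0\ge\sum_{\ell\le0}c_\ell$. Boundedness into any $\ell^v$ forces the functional $c\mapsto\sum c_\ell$ to be bounded on $\ell^u(\Z_-)$, so $u=1$.
\item If $C=0$, the diagonal entries satisfy $(\mathsf Hc)_k\ge c_k$. Testing $c=\one_{[-N,0]}$ gives $N^{1/v}\lesssim N^{1/u}$, hence $u\le v$. This is the same packet argument as in \cref{lem:diagonal}.
\end{itemize}
These restrictions accumulate, which produces the stated necessity in every row. For instance, $A=B=0$ gives both $u=1$ and $v=\infty$. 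Likewise $C=0$ with $AB=0$ gives $u=1$ or $v=\infty$ together with $u\le v$. To reach the full pair $(1,\infty)$ in the $C=0$, $AB=0$ row, we use a sharper test. If $C=0$ and $A=0$, the operator dominates the Hardy sum $c\mapsto(\sum_{\ell\le k}c_\ell)_k$. Testing $c=\delta_{\ell_0}$ gives an output equal to $1$ on $[\ell_0,0]$, of length $|\ell_0|$, which forces $v=\infty$. The symmetric test on $\mathsf Q$ handles $B=0$, forcing $u=1$.

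\emph{Sufficiency for $A,B>0$.} Both convolution kernels then lie in $\ell^1$, so $T_A$ and $T^*_B$ are bounded on every $\ell^r$ by Young's inequality.
\begin{itemize}[leftmargin=2em]
\item If $C>0$, then $D_C:\ell^r\to\ell^v$ for all $r,v$ by \cref{lem:diagonal}, since $2^{kC}\in\ell^\rho$ for every $\rho$. Routing $\mathsf P$ as $\ell^u\xrightarrow{T_A}\ell^u\xrightarrow{D_C}\ell^v$ and $\mathsf Q$ as $\ell^u\xrightarrow{D_C}\ell^v\xrightarrow{T^*_B}\ell^v$ gives boundedness for all $(u,v)$.
\item If $C=0$, we get $\ell^u\to\ell^u\hookrightarrow\ell^v$ whenever $u\le v$.
\end{itemize}

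\emph{Degenerate rows.} These are handled directly.
\begin{itemize}[leftmargin=2em]
\item For $A=0<B,C$, we have $\mathsf P c\le\|c\|_{\ell^1}(2^{kC})_k\in\ell^v$, and $\mathsf Q$ is as before.
\item For $B=0<A,C$, H\"older gives $(\mathsf Qc)_k\le\|(2^{\ell C})\|_{\ell^{u'}}\|c\|_{\ell^u}$ uniformly in $k$, so $\mathsf Q:\ell^u\to\ell^\infty$.
\item For $(u,v)=(1,\infty)$, every row with $A,B,C\ge0$ is trivially bounded, since all matrix entries are at most $1$.
\end{itemize}

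The main point needing care is the bookkeeping of the $C=0$, $AB=0$ row. There one must check that the Hardy-sum test really forces the full pair $(1,\infty)$, and not merely $u\le v$ plus a single endpoint. The $C>0$ off-diagonal case with $v<u$ is the other place where the factorization order, multiplier before or after the convolution, matters.
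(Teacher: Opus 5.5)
Your proposal is correct and follows essentially the paper's own argument. It uses the same splitting into $2^{kC}$ times a causal $\ell^1$ convolution plus an anticausal convolution applied to $(2^{kC}c_k)$, Young's inequality with \cref{lem:diagonal} for sufficiency, and point-mass and block tests for necessity; your explicit corner tests (a point mass with $|\ell_0|+1$ unit descendants under the Hardy sum, and its transpose) and the entrywise bound for $(u,v)=(1,\infty)$ simply spell out what the paper compresses into ``the same tests combine at the corners.''
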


\begin{proof}
Suppose first $A,B>0$.  The two one-sided kernels $2^{-An}\one_{n\ge0}$ and $2^{-Bn}\one_{n\ge0}$ belong to $\ell^1$.  Thus the first term in \eqref{eq:triangular-op} is an $\ell^u$-bounded convolution followed by multiplication by $2^{kC}$, and the second is convolution applied to $(2^{kC}c_k)$.  The conclusion follows from \cref{lem:diagonal}: when $C>0$ the geometric multiplier maps every $\ell^u$ to every $\ell^v$, while at $C=0$ it maps exactly for $u\le v$.

If $A=0$, the first term contains the prefix operator
\begin{equation}\label{eq:prefix}
2^{kC}\sum_{\ell\le k}c_\ell.
\end{equation}
For $u=1$ this is bounded by $\|c\|_{\ell^1}2^{kC}$.  If $C>0$, the geometric output lies in every $\ell^v$; if $C=0$, it lies only in $\ell^\infty$.  The second term is harmless when $B>0$.  The case $B=0$ is dual: the tail operator maps into $\ell^\infty$, and finite target fine index is impossible at the endpoint.  Combining these observations proves sufficiency in every endpoint row, including the corners.

For necessity, a negative $C$ fails on point masses sent toward $-\infty$.  If $A<0$, take a point mass at a very negative index and evaluate the first sum at $k=0$; if $B<0$, take a point mass at $0$ and evaluate the second sum far to the left.  At $A=0$, a block of $N$ normalized coefficients makes a prefix sum grow like $N^{1-1/u}$, hence $u=1$.  At $B=0$, a point mass creates $N$ descendants of comparable size, hence $v=\infty$.  At $C=0$ with $A,B>0$, diagonal transmission of $N$ separated coefficients gives
\[
N^{1/v}\lesssim N^{1/u},
\]
so $u\le v$.  The same tests combine at the corners and prove the remaining necessity statements.
\end{proof}

We now identify the geometric parameters.  Let $E_k=\{2^{k-1}<|x|<2^{k+1}\}$ for $k\le-3$, and write
\[
g=w_{\alpha_0,\alpha_\infty}f.
\]
When $x\in E_k$, $y\in E_\ell$ and $\ell\le k-3$, the clean kernel and the two origin powers give, after normalization by $|E_k|^{1/q}$ and $|E_\ell|^{1/p}$,
\begin{equation}\label{eq:origin-coeff-one}
2^{kC_0}2^{-A_0(k-\ell)}.
\end{equation}
When $k\le\ell-3$, the normalized coefficient is
\begin{equation}\label{eq:origin-coeff-two}
2^{\ell C_0}2^{-B_0(\ell-k)}.
\end{equation}
Thus the separated origin block is exactly modeled by
\[
\mathsf H_{A_0,B_0,C_0}.
\]
At infinity the dyadic index is reflected, with parameters\linebreak[2] $(A_\infty,B_\infty,C_\infty)$.

The sequence theorem must still be transferred from shell-constant inputs to arbitrary measurable functions.  For a disjoint shell family $(E_k)$ define the conditional expectation
\begin{equation}\label{eq:conditional}
\mathsf Pg
=
\sum_k\left(\frac1{|E_k|}\int_{E_k}|g(y)|\dd y\right)\one_{E_k}.
\end{equation}
It is classical, and follows directly from Hardy--Littlewood submajorization, that
\begin{equation}\label{eq:conditional-majorization}
(\mathsf Pg)^{**}(t)\le g^{**}(t).
\end{equation}
Consequently $\mathsf P$ is bounded on every $L^{p,u}$, including weak $L^p$.  Put
\begin{equation}\label{eq:c-shell}
c_\ell=|E_\ell|^{1/p-1}\int_{E_\ell}|g(y)|\dd y,
\qquad
\mathsf Pg=J_pc.
\end{equation}
Since the separated kernel is comparable to a constant on each shell rectangle, \eqref{eq:origin-coeff-one}--\eqref{eq:origin-coeff-two} imply
\begin{equation}\label{eq:sep-shell-bound}
|E_k|^{1/q}|T_{\rm sep}g(x)|
\lesssim
(\mathsf H_{A,B,C}c)_k,
\qquad x\in E_k.
\end{equation}
Hence \cref{lem:geometric-Lorentz,thm:triangular} give
\begin{equation}\label{eq:sep-transference}
\|T_{\rm sep}g\|_{L^{q,v}}
\lesssim
\|\mathsf H_{A,B,C}c\|_{\ell^v}
\lesssim
\|c\|_{\ell^u}
\simeq
\|\mathsf Pg\|_{L^{p,u}}
\lesssim
\|g\|_{L^{p,u}}.
\end{equation}
No comparable-radius singular block is treated by this scalarization; that part is handled in \cref{sec:comparable}.

\section{Necessity of the sharp conditions}\label{sec:necessity}

We prove every condition in \cref{thm:main} by lower tests on positive subblocks of the kernel.  It is enough to use nonnegative bounded functions with compact support away from the origin.

\subsection{The local margin}

Choose a fixed transition annulus on which both broken weights and all $U$-ratios are bounded above and below.  Restricting $x$ and $y$ to two overlapping subannuli reduces \eqref{eq:main-estimate} from below to the operator in \cref{thm:local}.  Therefore
\begin{equation}\label{eq:necessity-local}
S\ge0,
\qquad
S=0\Longrightarrow u\le v.
\end{equation}
The small-ball test in that theorem also shows that no radial or angular geometry can repair $S<0$.

\subsection{The four one-sided margins}

For the origin input condition, restrict $y$ to a small ball and $x$ to a fixed transition annulus.  After writing $g=w_{\alpha_0,\alpha_\infty}f$, the lower kernel bound has the rank-one form
\begin{equation}\label{eq:origin-input-lower}
Tg(x)\gtrsim c(x)
\int_{|y|<\varepsilon}|y|^{-\alpha_0-\sigma_0}g(y)\dd y,
\end{equation}
where $c$ is bounded below on a set of positive measure.  By \cref{lem:rank-one},
\[
|y|^{-\alpha_0-\sigma_0}\one_{|y|<\varepsilon}
\in L^{p',u'}.
\]
This is equivalent to
\begin{equation}\label{eq:A0-necessary}
A_0\ge0,
\qquad
A_0=0\Longrightarrow u=1.
\end{equation}
Interchanging input and output yields
\begin{equation}\label{eq:B0-necessary}
B_0\ge0,
\qquad
B_0=0\Longrightarrow v=\infty.
\end{equation}
Sending the input to infinity while fixing the output in a transition annulus produces the density
\[
|y|^{s-d+\sigma_\infty-\alpha_\infty}\one_{|y|>4},
\]
and therefore
\begin{equation}\label{eq:Ainf-necessary}
A_\infty\ge0,
\qquad
A_\infty=0\Longrightarrow u=1.
\end{equation}
The transposed test gives
\begin{equation}\label{eq:Binf-necessary}
B_\infty\ge0,
\qquad
B_\infty=0\Longrightarrow v=\infty.
\end{equation}
Negative margins fail by a single shrinking or expanding annular packet.  At equality, the critical power $|x|^{-d/r}$ belongs only to $L^{r,\infty}$, giving the stated fine-index restrictions.  Equivalently, a logarithmic truncation over $e^{-N}<|x|<e^{-1}$ has normalized pairing growth $N^{1-1/u}$ at an input endpoint and output growth $N^{1/v}$ at an output endpoint.

\subsection{The two scale margins}

Let $E_R,F_R$ be comparable balls of radius $cR$, centered at distance comparable to $R$ from the origin.  Set
\begin{equation}\label{eq:scale-packet}
g_R=R^{-d/p}\one_{E_R},
\qquad
f_R=w_{-\alpha_0,-\alpha_\infty}g_R.
\end{equation}
For $R\downarrow0$, the lower kernel bound on $F_R\times E_R$ gives
\begin{equation}\label{eq:scale-origin-ratio}
\|w_{-\beta_0,-\beta_\infty}H^{-s/2}f_R\|_{L^{q,v}(F_R)}
\gtrsim R^{C_0},
\qquad
\|g_R\|_{L^{p,u}}\simeq1.
\end{equation}
Thus $C_0\ge0$.  For $R\uparrow\infty$ the ratio is $R^{-C_\infty}$, forcing $C_\infty\ge0$.

If $C_0=0$, choose $N$ geometrically separated shrinking packet pairs.  By \cref{lem:geometric-Lorentz} and positivity,
\[
\|g_N\|_{L^{p,u}}\simeq N^{1/u},
\qquad
\|Tg_N\|_{L^{q,v}}\gtrsim N^{1/v}.
\]
Hence $u\le v$.  Expanding packets give the same conclusion when $C_\infty=0$.  This argument does not assume $p\le q$; it is precisely why Lorentz refinement restores scale equality in the upper triangle $q<p$.

\subsection{Same-side corners}

Suppose $A_0=C_0=0$.  Then \eqref{eq:margin-identities} gives $B_0=\gamma_0>0$.  The origin triangular model contains an unweighted prefix operator.  A block of $N$ normalized input coefficients feeding one outer shell forces $u=1$, while one deep input coefficient with $N$ outer descendants forces $v=\infty$.  Therefore
\begin{equation}\label{eq:corner-A0}
A_0=C_0=0
\quad\Longrightarrow\quad
(u,v)=(1,\infty).
\end{equation}
If $B_0=C_0=0$, use the tail sum instead.  Reflection proves the two infinity corners.  Tests at the origin and infinity occur on disjoint regions, so simultaneous endpoint requirements simply accumulate; no cross condition such as $A_0+B_\infty\ge0$ appears.

For emphasis, the following elementary free example shows why the corner rule is not the intersection of the separate rules alone.

\begin{proposition}[A corner counterexample]\label{prop:corner-counterexample}
Let $d=3$, $s=1$, $H=-\Delta$, $p=q=2$, $(u,v)=(1,2)$,
\[
(\alpha_0,\beta_0)=\left(\frac32,-\frac12\right),
\qquad
(\alpha_\infty,\beta_\infty)=(1,1).
\]
Then $A_0=C_0=0$ and every other margin is positive, but \eqref{eq:main-estimate} fails.
\end{proposition}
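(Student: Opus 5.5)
The plan is to verify the margin computation directly and then exhibit an explicit family of test functions on which the ratio in \eqref{eq:main-estimate} blows up; this is the ``one deep input coefficient with $N$ outer descendants'' test from the same-side corner discussion, made concrete. Here $H=-\Delta$, so $U\equiv1$ and $\sigma_0=\sigma_\infty=0$, and $s_{\rm clean}=d=3$, so $s=1$ is admissible. By \cref{thm:clean-kernel} (or simply the Riesz kernel), $K_1(x,y)\simeq|x-y|^{-2}$. With $p=q=2$ we get $S=1$. From \eqref{eq:margins}:
\[
A_0=\tfrac32-\tfrac32=0,\qquad
B_0=\tfrac32+\tfrac12=2,\qquad
C_0=1-\tfrac32+\tfrac12=0,
\]
\[
A_\infty=1-(1-\tfrac32)=\tfrac32,\qquad
B_\infty=\tfrac32,\qquad
C_\infty=2-1=1.
\]
This is consistent with \eqref{eq:margin-identities}, since $\gamma_0=2$. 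Hence $S,B_0,A_\infty,B_\infty,C_\infty>0$ and $A_0=C_0=0$. The separate rules \eqref{eq:fine-A} and \eqref{eq:fine-C} would then only demand $u=1\le v$, and this is satisfied by $(u,v)=(1,2)$. So the failure must come from the missing requirement $v=\infty$.

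\textbf{Test family.} Fix $N$, let $m=-N$, and use the shells $E_k=\{2^{k-1}<|x|<2^{k+1}\}$ (after passing to every fourth index to make them disjoint). Put $g=|E_m|^{-1/2}\one_{E_m}$ and $f=w_{-\alpha_0,-\alpha_\infty}g=|y|^{-3/2}g$. By \cref{lem:Lorentz-facts}(i), $\|w_{\alpha_0,\alpha_\infty}f\|_{L^{2,1}}=\|g\|_{L^{2,1}}\simeq1$, uniformly in $N$.

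\textbf{Output computation.} For $x\in E_k$ with $m+3\le k\le-3$ we have $|x-y|\simeq|x|\simeq2^k$ for all $y\in E_m$. Therefore
\[
H^{-1/2}f(x)\simeq2^{-2k}\int_{E_m}|y|^{-3/2}|E_m|^{-1/2}\dd y\simeq2^{-2k}\,2^{-3m/2}\,2^{-3m/2}\,2^{3m}=2^{-2k}.
\]
Multiplying by the output weight $w_{-\beta_0,-\beta_\infty}(x)=|x|^{1/2}$ gives $\simeq2^{-3k/2}\simeq|E_k|^{-1/2}$ on $E_k$. Thus the weighted output dominates $J_2c$ with $c_k=1$ for roughly $N$ geometric shells; this agrees with the triangular coefficient \eqref{eq:origin-coeff-two} at $B_0$-tail, $C_0=0$, read transposed as the prefix with $A_0=0$. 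By positivity and \cref{lem:geometric-Lorentz} with $r=2$, $a=v=2$, the $L^{2,2}$ norm of the output is $\gtrsim N^{1/2}$.

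\textbf{Conclusion and main obstacle.} The norm ratio is therefore $\gtrsim N^{1/2}\to\infty$, so \eqref{eq:main-estimate} fails. The only delicate point is the lower bound on the output norm: it requires the restriction to disjoint geometric shells, so that \cref{lem:geometric-Lorentz} applies legitimately in light of \cref{rem:no-scalarization}. It also requires checking that the pointwise kernel lower bound holds uniformly on each pair of separated shells. Both follow from the comparability $|x-y|\simeq|x|$ when $k\ge m+3$.
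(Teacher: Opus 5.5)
Your proof is correct and is essentially the paper's argument: a single $L^{2,1}$-normalized shell at depth $R\simeq2^{-N}$, whose weighted output is $\gtrsim|x|^{-3/2}$ on $4R<|x|<1/4$ and therefore has $L^2$ norm $\gtrsim(\log(1/R))^{1/2}\simeq N^{1/2}$. Two small points, neither of which affects the argument: since $v=q=2$ you can integrate $|x|^{-3}$ directly instead of invoking \cref{lem:geometric-Lorentz}, and your heuristic aside should cite \eqref{eq:origin-coeff-one}, where the prefix coefficient is $2^{kC_0}2^{-A_0(k-\ell)}=1$, rather than \eqref{eq:origin-coeff-two}.
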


\begin{proof}
For $0<R<1/16$, put
\[
h_R(y)=R^{-3/2}\one_{\{R<|y|<2R\}},
\qquad
f_R(y)=|y|^{-3/2}h_R(y).
\]
Then $\|h_R\|_{L^{2,1}}\simeq1$.  For $4R<|x|<1/4$, positivity and $|x-y|\simeq|x|$ give
\[
I_1f_R(x)\gtrsim |x|^{-2}.
\]
Multiplication by the output weight $|x|^{1/2}$ gives $|x|^{-3/2}$, so
\[
\big\||x|^{1/2}I_1f_R\big\|_{L^2(4R<|x|<1/4)}
\gtrsim \big(\log(1/R)\big)^{1/2}\longrightarrow\infty.
\]
\end{proof}

The preceding arguments prove the necessity of every power and fine-index condition in \cref{thm:main}.

\section{Comparable-radius blocks}\label{sec:comparable}

The singularity on comparable annuli cannot be replaced by shell averages.  We therefore prove an operator-level lemma.  At the origin define
\begin{equation}\label{eq:comparable-origin}
\mathcal C_{S,C}^{0}g(x)
=
\one_{|x|<1}
\int_{\substack{|y|<1\\ |x|/4<|y|<4|x|}}
|x|^{-S+C}|x-y|^{s-d}g(y)\dd y.
\end{equation}
At infinity define
\begin{equation}\label{eq:comparable-infinity}
\mathcal C_{S,C}^{\infty}g(x)
=
\one_{|x|>1}
\int_{\substack{|y|>1\\ |x|/4<|y|<4|x|}}
|x|^{-S-C}|x-y|^{s-d}g(y)\dd y.
\end{equation}
Changing the fixed constants $1/4$ and $4$ does not affect the statement.

\begin{lemma}[Comparable-radius Lorentz theorem]\label{lem:comparable}
Assume $S,C\ge0$.  Each operator in \eqref{eq:comparable-origin}--\eqref{eq:comparable-infinity} maps $L^{p,u}$ to $L^{q,v}$ under the following exact rules:
\begin{equation}\label{eq:comparable-rules}
\begin{cases}
\text{all }u,v,&S>0\text{ and }C>0,\\
u\le v,&S=0\text{ or }C=0.
\end{cases}
\end{equation}
Whenever $S=0$ or $C=0$, the condition $u\le v$ is necessary.
\end{lemma}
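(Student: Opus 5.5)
We rescale each dyadic annulus to a fixed one and treat the two rules in \eqref{eq:comparable-rules} by different mechanisms. By the reflection $x\mapsto x/|x|^2$ on dyadic indices the infinity operator is handled like the origin one, so we describe $\mathcal C^0_{S,C}$ only. For $k\le0$ let $E_k=\{2^{k-1}<|x|\le 2^{k}\}$ and $\tilde E_k=\{2^{k-3}<|y|<2^{k+3}\}$, and write $\mathcal C^0_{S,C}g=\sum_k \one_{E_k}T_k(g\one_{\tilde E_k})$. Put $R=2^k$ and substitute $x=Rx'$, $y=Ry'$. The kernel contributes $R^{s-d}$ and $\dd y$ contributes $R^d$. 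Dilation changes the $L^{p,u}$ and $L^{q,v}$ norms by $R^{-d/p}$ and $R^{d/q}$. Together with the factor $|x|^{-S+C}$ this gives
\[
\|T_k\|_{L^{p,u}\to L^{q,v}}\simeq R^{-S+C+s+d/q-d/p}\,\|\mathcal I\|=2^{kC}\|\mathcal I\|,
\]
where $\mathcal I$ is a fixed local Riesz operator between unit annuli. By \cref{thm:local}, $\|\mathcal I\|$ is finite for all $u,v$ when $S>0$, and for $u\le v$ when $S=0$.

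\emph{Case $C>0$.} We do not scalarize. The space $L^{q,v}$ is normable for $q>1$, so the triangle inequality gives
\[
\|\mathcal C^0_{S,C}g\|_{L^{q,v}}\lesssim\sum_{k\le0}2^{kC}\|g\|_{L^{p,u}}\lesssim\|g\|_{L^{p,u}}.
\]
This proves the first row of \eqref{eq:comparable-rules} when $S>0$, and the case $S=0$, $C>0$ under $u\le v$.

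\emph{Case $C=0$.} This is the main obstacle, because the geometric gain disappears and \cref{rem:no-scalarization} forbids summing the local pieces in $\ell^u$. We argue instead with global operators.

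First, if $p\le q$, then $s-S=d/p-d/q\ge0$. Since $|x-y|\le5|x|$ on the support, $|x|^{-S}|x-y|^{s-d}\lesssim|x-y|^{(s-S)-d}$. When $p<q$ we dominate by $I_{s-S}$ with $1/q=1/p-(s-S)/d$ and apply \cref{cor:global-HLS}, which holds exactly when $u\le v$.

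Second, if $q\le p$, we use a Hedberg-type bound. Summing dyadic shells $|x-y|\sim 2^{-j}|x|$ gives
\[
\int_{|x-y|<5|x|}|x-y|^{s-d}|g(y)|\dd y\lesssim|x|^{s}Mg(x),
\]
hence $\mathcal C^0_{S,0}g\lesssim|x|^{d/p-d/q}Mg$. If $q=p$ the weight is trivial and \eqref{eq:max-Lorentz} concludes. If $q<p$, then $|x|^{-(d/q-d/p)}\in L^{r,\infty}$ with $1/r=1/q-1/p$. \cref{lem:product} with fine indices $(u,\infty)$ places the product in $L^{q,u}\hookrightarrow L^{q,v}$ when $u\le v$; the $L^{p,u}$ bound on $Mg$ comes from \eqref{eq:max-Lorentz}. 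The restriction $|x|<1$ only helps.

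\emph{Necessity.} If $S=0$, restrict to one fixed annulus. The operator then dominates the local Riesz operator of \cref{thm:local}, whose packet test forces $u\le v$. If $C=0$, take $N$ geometrically separated comparable packet pairs $E_{R_j},F_{R_j}$ as in \eqref{eq:scale-packet}, with $R_j=L^{-j}$. Rescaling gives uniform transmission of size $\simeq1$ on each $F_{R_j}$. \cref{lem:geometric-Lorentz} together with positivity then yields $N^{1/v}\lesssim N^{1/u}$, so $u\le v$.
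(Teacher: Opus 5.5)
Your proof is correct, but for $C>0$ it takes a different route from the paper. For $C=0$ (domination by $I_{s-S}$ when $p<q$, and $\mathcal C^0_{S,0}g\lesssim|x|^{-\theta}Mg$ plus the product inequality when $q\le p$) and for both necessity tests, you follow the paper. The paper treats $S=0<C$ by discarding $|x|^{\pm C}\le1$ and dominating by $I_s$. For $S,C>0$ it factors the operator as $M_{\rm out}\,\mathcal C^{0,c}\,M_{\rm in}$ through auxiliary strong spaces $L^P\to L^Q$ with $1<P<Q<\infty$. There the shell pieces can legitimately be summed via $\ell^P\hookrightarrow\ell^Q$, and the Lorentz fine indices are absorbed by choosing the multipliers' fine indices in the product inequality. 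You instead use dilation invariance to get $\|T_k\|\lesssim2^{kC}$ from a single fixed-annulus operator. You then sum with the countable triangle inequality in the normable space $L^{q,v}$, $q>1$. In effect you run the paper's middle-step shell argument directly in $L^{p,u}\to L^{q,v}$. Because you sum norms in $\ell^1$ against a geometric weight, rather than rebuilding an $L^{q,v}$ norm from its pieces, \cref{rem:no-scalarization} never enters. This removes the bookkeeping of $a,b,x_0,y_0,P,Q$ and handles $S=0<C$ (under $u\le v$) and $S,C>0$ (all $u,v$) in one stroke. The paper's route keeps every summation inside Lebesgue spaces and so does not invoke normability of $L^{q,v}$; yours is shorter and more transparent. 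Two points of precision should be stated. First, the rescaled operators are only dominated by one fixed operator, since the cutoff $|y|<1$ survives at $k=0$; so write $\lesssim$ rather than $\simeq$. Second, that fixed operator has a part with $|x'-y'|$ bounded below, whose bounded kernel on bounded sets is harmless. \cref{thm:local} applies to the near-diagonal part because its sufficiency uses only the upper bound on $\chi$. Finally, the passage to infinity should be read as reflecting the dyadic index, not as the inversion $x\mapsto x/|x|^2$, which does not preserve Lebesgue measure. Each of your estimates runs verbatim there with $2^{-kC}$, $k\ge1$.
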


\begin{proof}
If $S=0$, then $1/q=1/p-s/d$, hence $p<q$.  Since $|x|^C\le1$ at the origin and $|x|^{-C}\le1$ at infinity, both operators are dominated by the Riesz potential $I_s$.  The global Lorentz--HLS theorem, \cref{cor:global-HLS}, gives $L^{p,u}\to L^{q,u}$ and hence every $v\ge u$.

Assume now $C=0<S$ and put
\[
\delta=d\left(\frac1p-\frac1q\right)=s-S.
\]
If $p<q$, then $0<\delta<s$.  On the comparable region $|x-y|\lesssim|x|$, and therefore
\[
|x|^{-S}|x-y|^{s-d}\lesssim |x-y|^{\delta-d}.
\]
Apply \cref{cor:global-HLS} with order $\delta$.  If $p=q$, then $S=s$ and dyadic integration around $x$ gives
\begin{equation}\label{eq:comp-maximal}
|x|^{-s}\int_{|x-y|\lesssim|x|}|x-y|^{s-d}|g(y)|\dd y
\lesssim Mg(x).
\end{equation}
If $q<p$, put
\[
\theta=d\left(\frac1q-\frac1p\right)=S-s>0.
\]
The same dyadic integration gives
\begin{equation}\label{eq:comp-uppertriangle}
\mathcal C_{S,0}g(x)\lesssim |x|^{-\theta}Mg(x).
\end{equation}
On either pure end the multiplier $|x|^{-\theta}$ belongs to $L^{d/\theta,\infty}$.  The maximal theorem in Lorentz spaces and the product estimate \eqref{eq:product} yield
\[
\||x|^{-\theta}Mg\|_{L^{q,u}}
\lesssim\|g\|_{L^{p,u}}.
\]
Thus $u\le v$ is sufficient in all three primary-exponent orders.

Finally suppose $S,C>0$.  Choose $a,b>0$ such that
\begin{equation}\label{eq:choose-ab}
\max\left\{0,d\left(\frac1q-\frac1p\right)\right\}<a+b<S,
\qquad
a<\frac d{p'},\quad b<\frac dq.
\end{equation}
Set
\begin{equation}\label{eq:PQ}
\frac1P=\frac1p+\frac ad,
\qquad
\frac1Q=\frac1q-\frac bd.
\end{equation}
Then $1<P<Q<\infty$, and $S_c=S-a-b>0$.  Choose $x_0,y_0>0$ with $x_0+y_0<C$.  At the origin factor
\begin{equation}\label{eq:comp-factor}
\mathcal C_{S,C}^{0}
=M_{|x|^{-b+y_0}}
\mathcal C_{S_c,C-x_0-y_0}^{0,c}
M_{|y|^{-a+x_0}},
\end{equation}
where the middle operator has primary exponents $P,Q$.  The identity of radial powers is
\[
(-b+y_0)+[-S_c+(C-x_0-y_0)]+(-a+x_0)=-S+C.
\]
Both multipliers are strictly better than their critical powers and therefore belong to the required Lorentz classes with arbitrary fine index.  By choosing their fine indices according to whether $u\le P$ and $Q\le v$, the product inequality gives
\[
L^{p,u}
\xrightarrow{M_{\rm in}}L^P
\xrightarrow{\mathcal C^{0,c}}L^Q
\xrightarrow{M_{\rm out}}L^{q,v}.
\]
The middle strong estimate follows from a dyadic shell decomposition: finite shell overlap and the local strong estimate give a diagonal coefficient $2^{k(C-x_0-y_0)}$; because $P<Q$ and the exponent is positive, ordinary $\ell^P\hookrightarrow\ell^Q$ and geometric summation apply.  At infinity one uses multiplier powers $-a-x_0$ and $-b-y_0$; the proof is identical after radial reflection.

Necessity of $u\le v$ at $S=0$ follows from the local packets in \cref{thm:local}; at $C=0$ it follows from geometrically separated same-scale packets as in the scale-margin test.  This completes the proof.
\end{proof}

\section{Nine-block sufficiency}\label{sec:sufficiency}

Choose a smooth radial partition
\begin{equation}\label{eq:partition}
\eta_0+\eta_m+\eta_\infty=1
\end{equation}
with
\[
\begin{aligned}
\supp\eta_0&\subset\{|x|<3/4\},
&\supp\eta_m&\subset\{1/2<|x|<2\},\\
\supp\eta_\infty&\subset\{|x|>3/2\}.
\end{aligned}
\]
After conjugating by the input and output broken weights, write
\[
T_{ij}=\eta_iw_{-\beta_0,-\beta_\infty}H^{-s/2}
 w_{-\alpha_0,-\alpha_\infty}\eta_j,
\qquad i,j\in\{0,m,\infty\}.
\]
Rows denote output zones and columns input zones.

\subsection{The two deep diagonal blocks}

For $T_{00}$ split into comparable radii and separated radii.  The comparable piece is controlled by \cref{lem:comparable} with parameters $(S,C_0)$.  The separated piece is controlled by \cref{thm:triangular} with $(A_0,B_0,C_0)$ and the conditional-expectation transference \eqref{eq:sep-transference}.  Hence the exact origin truth table is
\begin{center}
\begin{tabular}{cccc}
\toprule
$A_0$ & $B_0$ & $C_0$ & condition\\
\midrule
$>0$ & $>0$ & $>0$ & none\\
$=0$ & $>0$ & $>0$ & $u=1$\\
$>0$ & $=0$ & $>0$ & $v=\infty$\\
$>0$ & $>0$ & $=0$ & $u\le v$\\
$=0$ & $>0$ & $=0$ & $(u,v)=(1,\infty)$\\
$>0$ & $=0$ & $=0$ & $(u,v)=(1,\infty)$\\
\bottomrule
\end{tabular}
\end{center}
together with the global local rule $S=0\Rightarrow u\le v$.  The row $A_0=B_0=0$ is impossible under $C_0\ge0$ by \eqref{eq:margin-identities}.  Radial reflection gives the identical assertion for $T_{\infty\infty}$ with $(A_\infty,B_\infty,C_\infty)$.

\subsection{Transition and cross-tail blocks}

The nine blocks require the following conditions:
\begin{center}
\begin{tabular}{c|ccc}
\toprule
output $\backslash$ input & $0$ & $m$ & $\infty$\\
\midrule
$0$ & deep $(A_0,B_0,C_0)$ & local / $B_0$ & rank one: $B_0,A_\infty$\\
$m$ & local / $A_0$ & local Riesz & local / $A_\infty$\\
$\infty$ & rank one: $B_\infty,A_0$ & local / $B_\infty$ & deep $(A_\infty,B_\infty,C_\infty)$\\
\bottomrule
\end{tabular}
\end{center}
For each block involving the middle annulus, insert a cutoff separating $|x-y|\le1/8$ from $|x-y|>1/8$.  On the near part both variables remain in a fixed annulus; the broken weights, $U$, and every ratio in \eqref{eq:clean-kernel-main} are bounded above and below, so \cref{thm:local} applies.  On the far part there is no diagonal singularity.  If the other variable remains in a compact annulus, a bounded-kernel estimate applies.  If it tends to zero or infinity, the kernel is one of the four rank-one factors in \eqref{eq:rank-factors}, and \cref{lem:rank-one} gives exactly the displayed $A_j$ or $B_j$ rule.

The genuinely mixed blocks are products up to fixed constants:
\begin{equation}\label{eq:mixed-products}
T_{0\infty}: b_0(x)a_\infty(y),
\qquad
T_{\infty0}: b_\infty(x)a_0(y).
\end{equation}
Thus no cross-sum condition involving, for example, $\alpha_0+\beta_\infty$ can arise.

\begin{proof}[Proof of sufficiency in \cref{thm:main}]
Assume \eqref{eq:power-all} and all applicable fine-index rules.  The local theorem controls every near-diagonal transition piece.  The rank-one lemma controls every separated transition and cross-tail block.  The triangular theorem and \cref{lem:comparable} control the separated and comparable parts of the two deep diagonal blocks, including every same-side corner.  Since there are only nine blocks, summing their estimates proves \eqref{eq:main-estimate} for bounded compactly supported inputs away from the origin.  The extension to all measurable inputs is given in \cref{sec:extension}.  Necessity was proved in \cref{sec:necessity}, completing the proof of \cref{thm:main}.
\end{proof}

\section{Extension, duality, and consistency checks}\label{sec:extension}

\subsection{Weak Lorentz inputs}

Simple compactly supported functions are not norm dense in $L^{p,\infty}$, so a completion argument is insufficient at $u=\infty$.  For a nonnegative $g\in L^{p,u}$ define
\begin{equation}\label{eq:monotone-truncation}
g_n(x)=\min\{g(x),n\}\one_{\{1/n<|x|<n\}}.
\end{equation}
Then $g_n\uparrow g$.  Put $f_n=w_{-\alpha_0,-\alpha_\infty}g_n$ and define
\begin{equation}\label{eq:positive-extension}
\mathcal Tg(x)=\lim_{n\to\infty}
 w_{-\beta_0,-\beta_\infty}(x)H^{-s/2}f_n(x)
\in[0,\infty].
\end{equation}
The estimates for $g_n$, the Fatou property of $L^{q,v}$, and
$\|g_n\|_{L^{p,u}}\le\|g\|_{L^{p,u}}$ give
\[
\|\mathcal Tg\|_{L^{q,v}}\le C\|g\|_{L^{p,u}}.
\]
Hence the limit is finite almost everywhere.  For signed or complex $g$, apply the construction to the positive and negative parts of the real and imaginary components.  Positivity gives $|\mathcal Tg|\le\mathcal T|g|$, so the integral is absolutely convergent almost everywhere.  This is the canonical extension in \cref{thm:main}.

\subsection{Duality}

The symmetry $K_s^H(x,y)=K_s^H(y,x)$ induces the transformation
\begin{equation}\label{eq:duality-map}
(p,q,u,v,\alpha_j,\beta_j)
\longmapsto
(q',p',v',u',\beta_j,\alpha_j).
\end{equation}
Under \eqref{eq:duality-map}, $S$ and the two $C_j$ are fixed, while $A_j$ and $B_j$ are interchanged.  The condition $u\le v$ becomes $v'\le u'$, $u=1$ becomes a dual output fine index $\infty$, and $v=\infty$ becomes a dual input fine index $1$.  Thus every rule in \cref{thm:main} is invariant.  At weak fine indices this uses the \Kothe{} associate pairing, not an identification of the full Banach dual of weak $L^p$.

\subsection{Strong spaces}

Set $u=p$ and $v=q$.  Since $p>1$, an input equality requiring $u=1$ is impossible; since $q<\infty$, an output equality requiring $v=\infty$ is impossible.  Hence all four one-sided inequalities are strict.  A scale equality requires $p\le q$.  Consequently the strong estimate holds exactly under
\[
S\ge0,
\]
\[
\alpha_0+\sigma_0<\frac d{p'},
\qquad
\beta_0+\sigma_0<\frac dq,
\]
\[
\alpha_\infty>s+\sigma_\infty-\frac dp,
\qquad
\beta_\infty>s+\sigma_\infty-\frac d{q'},
\]
and
\[
\alpha_0+\beta_0\le S\le\alpha_\infty+\beta_\infty
\quad(p\le q),
\]
while both scale inequalities are strict for $q<p$.

\subsection{Free and homogeneous models}

For $H=-\Delta$, $\sigma_0=\sigma_\infty=0$, and all four weight exponents zero, the inequalities $C_0,C_\infty\ge0$ force $S=0$, and the only fine-index condition is $u\le v$.  Thus \cref{thm:main} reduces exactly to the classical Lorentz Hardy--Littlewood--Sobolev theorem.

Suppose instead that $U(r)=r^{-\sigma}$ globally and
\[
\alpha_0=\alpha_\infty=\alpha,
\qquad
\beta_0=\beta_\infty=\beta.
\]
Then $C_0,C_\infty\ge0$ force
\[
\alpha+\beta=S.
\]
Writing
\[
A=\frac d{p'}-\alpha-\sigma,
\qquad
B=\frac dq-\beta-\sigma,
\]
the exact homogeneous theorem is
\[
S\ge0,
\qquad
\alpha+\beta=S,
\qquad
A,B\ge0.
\]
If $A,B>0$, the fine-index condition is $u\le v$; if either $A=0$ or $B=0$, the only pair is $(1,\infty)$.  This range can be nonempty for $q<p$: the strong estimate fails there because $u=p>q=v$, not because homogeneous scale equality is intrinsically impossible.

\subsection{Signed exponents, thin balls, and angular multiplicity}

All deep-block computations use $\sigma_j$ only through the margins and
\[
\gamma_j=d-s-2\sigma_j>0.
\]
A negative $\sigma_j$ changes whether a radial factor grows or decays but creates no new fine-index rule or unshifted ceiling on $\alpha_j$ or $\beta_j$.

A ball of radius $h\ll R$ centered at radius $R$ has normalized packet ratio
\begin{equation}\label{eq:thin-ball}
R^{C_0}\left(\frac hR\right)^S
\quad\text{at the origin},
\qquad
R^{-C_\infty}\left(\frac hR\right)^S
\quad\text{at infinity}.
\end{equation}
Because $S,C_0,C_\infty\ge0$, optimizing independently in $R$ and $h/R$ creates no additional condition.  Packing many equal balls at one scale is already governed by the local Lorentz theorem, while packing geometric scales is governed by the annular theorem.  Angular multiplicity therefore adds no inequality.

\subsection{Boundary diagnostics}

The proof also records the mechanism of every excluded case:
\begin{center}
\begin{tabular}{lll}
\toprule
failed condition & test & divergence\\
\midrule
$S<0$ & thin transition ball & $\varepsilon^S$\\
$S=0$, $u>v$ & shrinking local packets & $N^{1/v-1/u}$\\
$A_j<0$ & one input packet & geometric power\\
$A_j=0$, $u>1$ & logarithmic input & $N^{1-1/u}$\\
$B_j<0$ & one output packet & geometric power\\
$B_j=0$, $v<\infty$ & critical output power & $N^{1/v}$\\
$C_j<0$ & one same-scale packet & geometric power\\
$C_j=0$, $u>v$ & $N$ scale copies & $N^{1/v-1/u}$\\
$A_j=C_j=0$, $v<\infty$ & prefix descendants & $N^{1/v}$\\
$B_j=C_j=0$, $u>1$ & tail concentration & $N^{1-1/u}$\\
\bottomrule
\end{tabular}
\end{center}
These tests also show that constants necessarily diverge when a positive strict margin approaches zero at an incompatible fine-index pair.

\subsection{Further directions}

The present theorem concerns the pure-power $A_2$ branch and the clean fractional-order range.  Natural extensions include logarithmic ground-state branches, the critical orders $s=d$ and $s=d-2\sigma_j$ where the kernel acquires logarithmic or saturated terms, endpoint primary exponents $p=1$ or $q=\infty$, and nonradial perturbations for which the two-regime ground-state geometry is unavailable.  Each requires a different kernel model and is not claimed here.

\bibliographystyle{amsplain}
\bibliography{references}

\providecommand{\bysame}{\leavevmode\hbox to3em{\hrulefill}\thinspace}
\providecommand{\MR}{\relax\ifhmode\unskip\space\fi MR }
\providecommand{\MRhref}[2]{%
  \href{http://www.ams.org/mathscinet-getitem?mr=#1}{#2}
}
\providecommand{\href}[2]{#2}
\begin{thebibliography}{10}

\bibitem{AdamsHedberg1996}
David~R. Adams and Lars~Inge Hedberg, \emph{Function spaces and potential
  theory}, Grundlehren der mathematischen Wissenschaften, vol. 314,
  Springer-Verlag, Berlin, 1996.

\bibitem{BennettSharpley1988}
Colin Bennett and Robert Sharpley, \emph{Interpolation of operators}, Pure and
  Applied Mathematics, vol. 129, Academic Press, Boston, MA, 1988.

\bibitem{BerghLofstrom1976}
J{\"o}ran Bergh and J{\"o}rgen L{\"o}fstr{\"o}m, \emph{Interpolation spaces: An
  introduction}, Grundlehren der mathematischen Wissenschaften, vol. 223,
  Springer-Verlag, Berlin-New York, 1976.

\bibitem{BongioanniHarboureSalinas2008}
Bruno Bongioanni, Eleonor Harboure, and Oscar Salinas, \emph{Weighted
  inequalities for negative powers of {Schr\"odinger} operators}, J. Math.
  Anal. Appl. \textbf{348} (2008), no.~1, 12--27.

\bibitem{BuiEtAl2017}
The~Anh Bui, Piero D'Ancona, Xuan~Thinh Duong, Ji~Li, and Fu~Ken Ly,
  \emph{Weighted estimates for powers and smoothing estimates of
  {Schr\"odinger} operators with inverse-square potentials}, J. Differential
  Equations \textbf{262} (2017), no.~3, 2771--2807.

\bibitem{CaffarelliKohnNirenberg1984}
Luis Caffarelli, Robert Kohn, and Louis Nirenberg, \emph{First order
  interpolation inequalities with weights}, Compositio Math. \textbf{53}
  (1984), no.~3, 259--275.

\bibitem{CascanteOrtegaVerbitsky2004}
Carme Cascante, Joaquim~M. Ortega, and Igor~E. Verbitsky, \emph{Nonlinear
  potentials and two weight trace inequalities for general dyadic and radial
  kernels}, Indiana Univ. Math. J. \textbf{53} (2004), no.~3, 845--882.

\bibitem{Davies1989}
E.~Brian Davies, \emph{Heat kernels and spectral theory}, Cambridge Tracts in
  Mathematics, vol.~92, Cambridge University Press, Cambridge, 1989.

\bibitem{Duoandikoetxea2013}
Javier Duoandikoetxea, \emph{Fractional integrals on radial functions with
  applications to weighted inequalities}, Ann. Mat. Pura Appl. (4) \textbf{192}
  (2013), no.~4, 553--568.

\bibitem{FabesKenigSerapioni1982}
Eugene~B. Fabes, Carlos~E. Kenig, and Raul~P. Serapioni, \emph{The local
  regularity of solutions of degenerate elliptic equations}, Comm. Partial
  Differential Equations \textbf{7} (1982), no.~1, 77--116.

\bibitem{GorbachevIvanov2019}
Dmitry~V. Gorbachev and Valerii~I. Ivanov, \emph{Weighted inequalities for
  {Dunkl--Riesz} potential}, Chebyshevskii Sb. \textbf{20} (2019), no.~1,
  131--147, In Russian.

\bibitem{Grafakos2014}
Loukas Grafakos, \emph{Classical {Fourier} analysis}, third ed., Graduate Texts
  in Mathematics, vol. 249, Springer, New York, 2014.

\bibitem{HanninenHytonenLi2016}
Timo~S. H{\"a}nninen, Tuomas~P. Hyt{\"o}nen, and Kangwei Li, \emph{Two-weight
  {$L^p$--$L^q$} bounds for positive dyadic operators: unified approach to
  {$p\le q$} and {$p>q$}}, Potential Anal. \textbf{45} (2016), no.~3, 579--608.

\bibitem{HardyLittlewood1928}
G.~H. Hardy and J.~E. Littlewood, \emph{Some properties of fractional
  integrals. {I}}, Math. Z. \textbf{27} (1928), no.~1, 565--606.

\bibitem{HuZahle2009}
Jiaxin Hu and Martina Z{\"a}hle, \emph{Generalized {Bessel} and {Riesz}
  potentials on metric measure spaces}, Potential Anal. \textbf{30} (2009),
  no.~4, 315--340.

\bibitem{Hunt1966}
Richard~A. Hunt, \emph{On {$L(p,q)$} spaces}, Enseign. Math. (2) \textbf{12}
  (1966), 249--276.

\bibitem{IKO2017}
Kazuhiro Ishige, Yoshitsugu Kabeya, and El~Maati Ouhabaz, \emph{The heat kernel
  of a {Schr\"odinger} operator with inverse square potential}, Proc. Lond.
  Math. Soc. (3) \textbf{115} (2017), no.~2, 381--410.

\bibitem{IshigeTateishi2020a}
Kazuhiro Ishige and Yujiro Tateishi, \emph{Decay estimates for {Schr\"odinger}
  heat semigroup with inverse square potential in {Lorentz} spaces}, J. Evol.
  Equ. \textbf{22} (2022), Paper No. 16, 25 pp.

\bibitem{IshigeTateishi2021}
\bysame, \emph{Decay estimates for {Schr\"odinger} heat semigroup with inverse
  square potential in {Lorentz} spaces {II}}, Discrete Contin. Dyn. Syst.
  \textbf{42} (2022), no.~1, 369--401.

\bibitem{Kairema2013}
Anna Kairema, \emph{Two-weight norm inequalities for potential type and maximal
  operators in a metric space}, Publ. Mat. \textbf{57} (2013), no.~1, 3--56.

\bibitem{Kerman1983}
Richard~A. Kerman, \emph{Convolution theorems with weights}, Trans. Amer. Math.
  Soc. \textbf{280} (1983), no.~1, 207--219.

\bibitem{KMMVZZ2018}
Rowan Killip, Changxing Miao, Monica Visan, Jian Zhang, and Jiqiang Zheng,
  \emph{Sobolev spaces adapted to the {Schr\"odinger} operator with
  inverse-square potential}, Math. Z. \textbf{288} (2018), no.~3--4,
  1273--1298.

\bibitem{LiuYuZhou2026}
Haochen Liu, Qinghao Yu, and Hongyan Zhou, \emph{Sharp and endpoint two-weight
  fractional integral estimates for {Schr\"odinger} operators with
  inverse-square potentials}, arXiv preprint, 2026, arXiv:2607.09585.

\bibitem{Lorentz1950}
George~G. Lorentz, \emph{Some new functional spaces}, Ann. of Math. (2)
  \textbf{51} (1950), no.~1, 37--55.

\bibitem{Muckenhoupt1972}
Benjamin Muckenhoupt, \emph{Weighted norm inequalities for the {Hardy} maximal
  function}, Trans. Amer. Math. Soc. \textbf{165} (1972), 207--226.

\bibitem{MuckenhouptWheeden1974}
Benjamin Muckenhoupt and Richard~L. Wheeden, \emph{Weighted norm inequalities
  for fractional integrals}, Trans. Amer. Math. Soc. \textbf{192} (1974),
  261--274.

\bibitem{NowakStempak2017}
Adam Nowak and Krzysztof Stempak, \emph{Potential operators associated with
  {Hankel} and {Hankel--Dunkl} transforms}, J. Anal. Math. \textbf{131} (2017),
  277--321.

\bibitem{ONeil1963}
Richard O'Neil, \emph{Convolution operators and {$L(p,q)$} spaces}, Duke Math.
  J. \textbf{30} (1963), no.~1, 129--142.

\bibitem{Ouhabaz2005}
El~Maati Ouhabaz, \emph{Analysis of heat equations on domains}, London
  Mathematical Society Monographs Series, vol.~31, Princeton University Press,
  Princeton, NJ, 2005.

\bibitem{Sawyer1988}
Eric~T. Sawyer, \emph{A characterization of two weight norm inequalities for
  fractional and {Poisson} integrals}, Trans. Amer. Math. Soc. \textbf{308}
  (1988), no.~2, 533--545.

\bibitem{SawyerWheeden1992}
Eric~T. Sawyer and Richard~L. Wheeden, \emph{Weighted inequalities for
  fractional integrals on {Euclidean} and homogeneous spaces}, Amer. J. Math.
  \textbf{114} (1992), no.~4, 813--874.

\bibitem{Shen1995}
Zhongwei Shen, \emph{{$L^p$} estimates for {Schr\"odinger} operators with
  certain potentials}, Ann. Inst. Fourier (Grenoble) \textbf{45} (1995), no.~2,
  513--546.

\bibitem{Sobolev1938}
Sergei~L. Sobolev, \emph{On a theorem of functional analysis}, Mat. Sb.
  \textbf{4 (46)} (1938), 471--497, English translation: Amer. Math. Soc.
  Transl. (2) 34 (1963), 39--68.

\bibitem{SteinWeiss1958}
Elias~M. Stein and Guido Weiss, \emph{Fractional integrals on {$n$}-dimensional
  {Euclidean} space}, J. Math. Mech. \textbf{7} (1958), no.~4, 503--514.

\bibitem{Tanaka2014}
Hitoshi Tanaka, \emph{A characterization of two-weight trace inequalities for
  positive dyadic operators in the upper triangle case}, Potential Anal.
  \textbf{41} (2014), no.~2, 487--499.

\end{thebibliography}

\end{document}